\newtheorem{theorem}{Theorem}[section]
\newtheorem{proposition}[theorem]{Proposition}
\newtheorem{lemma}[theorem]{Lemma}
\newtheorem{remark}[theorem]{Remark}
\newcommand{\dd}{\mathrm{d}}
\begin{document}
\title[Multiplicity of solutions for a scalar field equation]{Multiplicity of solutions for a scalar field equation
involving a fractional $p$-Laplacian with general nonlinearity}

\author[H. P. Bueno]{H. P. Bueno}\thanks{First author takes part in the project 422806/2018-8 by CNPq/Brazil}
\author[O. H. Miyagaki]{O. H. Miyagaki}\thanks{Second author was supported by Grant 2019/24901-3 by S\~ao Paulo Research Foundation (FAPESP) and Grant 307061/2018-3 by CNPq/Brazil.}
\author[A. L. Vieira]{A. L. Vieira}
\address[H. P. Bueno]{Departmento de Matemática, Universidade Federal de Minas Gerais, 31270-901 - Belo Horizonte-MG, Brazil}
\email{hamilton@mat.ufmg.br}
\address[O. H. Miyagaki]{Departmento de Matemática, Universidade Federal de S\~ao Carlos, 13565-905 - S\~ao Carlos-SP, Brazil}
\email{olimpio@ufscar.br, ohmiyagaki@gmail.com}
\address[A. L. Vieira]{Departmento de Matemática, Universidade Federal de Minas Gerais, 31270-901 - Belo Horizonte-MG, Brazil and Departmento de Ciências Exatas, UFVJM, 39803-371 - Te\'ofilo Otoni, Brazil}
\email{vieira.ailton@ufvjm.edu.br} 
\subjclass[2020]{35A15, 35R11,
35J62, 35B33} 
\keywords{Fractional $p$-Laplacian; symmetric
criticality; Moser-Trudinger inequality; exponential and polynomial
growth.}

\begin{abstract} We investigate the existence of infinitely many radially symmetric solutions to the following problem
$$(-\Delta_p)^s u=g(u) \ \ \textrm{ in } \ \ \mathbb{R}^N,  \ \ u\in W^{s,p}(\mathbb{R}^N),$$ where
$s\in (0,1)$, $2 \leq p < \infty$, $sp \leq N $, $2 \leq N \in
\mathbb{N}$ and $(-\Delta_p)^s$ is the fractional $p$-Laplacian
operator. We treat both of cases $sp=N$ and $sp<N.$ The nonlinearity
$g$ is a function of Berestycki-Lions type with critical exponential growth if $sp=N$ and critical polynomial growth if $sp<N$. We also prove the existence of a ground state solution for the same problem. 
\end{abstract}
\maketitle
\section{Introduction}
In this paper, we establish the existence of infinitely many
radially symmetric solutions to the problem
\begin{equation}\label{Prob1}
(-\Delta_p)^s u=g(u) \ \ \textrm{ in } \ \ \mathbb{R}^N,
\end{equation}
where $s\in (0,1)$, $2 \leq p < \infty$, $sp \leq N $, $2\leq N \in
\mathbb{N}$, $(-\Delta_p)^s$ is the fractional $p$-Laplacian
operator and $g$ is an odd continuous function satisfying some
properties.

Great attention has been devoted to the study of elliptic equations involving the fractional $p$-Laplacian operator in recent years.
Mainly when $p = 2$, it appears in many models arising from concrete
applications in Biology (e.g., population dynamics), Physics (e.g.,
continuum mechanics, phase transition phenomena), Game Theory and
Financial Mathematics, see \cite{VA,BBMP}. But the operator also
draws attention from a purely mathematical point of view, because of
the challenging difficulties due to its both nonlocal and nonlinear
character \cite{GP}.

The problem \eqref{Prob1} has already evolved into an elaborate
theory whose literature is too broad to attempt any comprehensive
synthesis on a single paper. We refer the interested reader to the
papers \cite{VA, BBMP, DPV, GP} and references therein.

By using variational methods, Hirata, Ikoma and Tanaka \cite{HIT} studied a particular limit case of problem \eqref{Prob1}. Namely, they studied multiplicity of solutions for $-\triangle u =g(u) \textrm{ in } H^1(\mathbb{R}^N), \quad N \geq 2$, slightly extending the results of Berestycki and Lions \cite{BL} in the case $N\geq 3$ and Berestycki, Gallou\"et and Kavian \cite{BGK} for $N=2$. Following a similar framework, Ambrosio establishes multiplicity of solutions for $(-\Delta)^su=g(u)$ in  $H^s(\mathbb{R}^N)$, if $N \geq 2$ and $s\in (0,1)$. The techniques
used in \cite{HIT,VA} depend heavily on the Poho\v{z}aev identity for the fractionary operator, which plays a crucial role for proving the compactness condition for the Palais-Smale sequences. Since a proof of the Poho\v{z}aev
identity for the fractionary $p$-Laplacian operator is still
unknown, we have looked for an alternative approach, adapting ideas of Zhang and Chen \cite{Zhang}. Furthermore, as in the papers \cite{VA,HIT}, we obtain our results without supposing the Ambrosetti-Rabinowitz condition.

Let us denote
$$p_{s}^*=\left\{\begin{array}{ll}\displaystyle\frac{pN}{N-sp}, &\textrm{if }\ sp<N;\vspace{.1cm}\\
    +\infty, &\textrm{ if } sp =N.
\end{array}\right.
$$
In the case $sp\leq N$, it is well known that $W_{r}^{s,p}(\mathbb{R}^N)$ is compactly embedded in $L^q(\mathbb{R}^N)$ for any $q\in (p,p_{s}^*)$ (see \cite[Theorem 1.1.11]{VA2} or \cite[Theorem II.1]{PLL}), the embedding being continuous if $q=p$ or $q=p_{s}^*$, if $p_{s}^*<\infty$.  We denote by $C_q>0$ the best constant of this Sobolev embedding, that is,
$$C_q\left[\int_{\mathbb{R}^N}{|u|^q}\right]^{\frac{p}{q}} \leq [u]_{s,p}^{p}+\int_{\mathbb{R}^N}{|u|^p}, \quad \forall u \in  W^{s,p}(\mathbb{R}^N).$$

Inspired by the papers Alves, Figueiredo and Siciliano \cite{AFS} and Alves, Souto and Montenegro \cite{ASM} - the first dealing with the fractionary Laplacian operator and the second considering the Laplacian operator - we write the nonlinearity $g$ in \eqref{Prob1} in the form $g(t)=-|t|^{p-2}t+f(t)$,  where $f$ is a continuous odd function satisfying
\begin{enumerate}
\item [$(f1)$] $\displaystyle\lim_{t \to 0}{\frac{f(t)}{|t|^{p-2}t}}=0$;
\item [$(f2)$]
$\displaystyle\limsup_{t \to
\infty}{\frac{f(t)}{|t|^{p_{s}^{*}-1}}}\leq 1, \quad sp<N$;
%\item [$(f3)$]
%$H(t)=f(t)t-pF(t) \geq 0, \textrm{ for every } t> 0$, where
%$F(t)=\int_{0}^{t}f(\tau)\dd \tau\geq 0$;
\item [$(f3)$] There exists $\mu >0$ and $q\in
(p,+\infty)$ such that $f(t)\geq \mu t^{q-1}, \ \forall\,t\geq 0$;
\item [$(f4)$] If $sp=N$, there exists $\alpha_0 >0$ such that
$$\lim_{t \to +\infty}{\frac{f(t)}{e^{\alpha
t^{\frac{N}{N-s}}}}}=0$$ (respectively, $=+\infty$), if $\alpha >
\alpha_0$ (respectively, $\alpha <  \alpha_0)$.
\end{enumerate}

The natural setting for problem \eqref{Prob1} is the fractional
Sobolev space
$$W^{s,p}(\mathbb{R}^{N}) := \{  u\in L^{p}(\mathbb{R}^N)\,:\, [u]_{s,p}^{p}< \infty \},$$
where
$$[u]_{s,p}^{p}:=\int_{}^{}\int_{\mathbb{R}^{2N}}^{}{\frac{|u(x)-u(y)|^{p}}{|x-y|^{N+sp}}}\dd x\dd y$$
is the Gagliardo seminorm of $u$. This space, endowed with the
natural norm
$\|u\|_{s,p}=([u]_{s,p}^{p}+\|u\|_{p}^{p})^{\frac{1}{p}}$ (where
$\|u\|_p=\|u\|_{L^p(\mathbb{R}^N)}$) is a reflexive Banach space,
see, e.g., \cite{DD}.

Due to Principle of Symmetric Criticality (see \cite{RSP}), it is sufficient to find
solutions to the problem \eqref{Prob1} in the closed subspace of
radial functions, that is, in the space
$$W_{r}^{s,p}(\mathbb{R}^{N}) := \{  u\in W^{s,p}(\mathbb{R}^N) ; u(|x|)=u(x)\}.$$

A weak solution of \eqref{Prob1} satisfies, for any $\phi \in
W_{r}^{s,p}(\mathbb{R}^N)$,
$$\langle (-\Delta_p)^s u, \phi\rangle +
\int_{\mathbb{R}^N}{|u|^{p-2}u\phi \dd x} =
\int_{\mathbb{R}^N}^{}{f(u)\phi \dd x},$$ where
$$\langle (-\Delta_p)^su, v\rangle= \iint_{\mathbb{R}^{2N}}^{}{\frac{|u(x)-u(y)|^{p-2}(u(x)-u(y))(v(x)-v(y))}{|x-y|^{N+sp}}\dd x\dd y}.$$

The ``energy" functional $I\in
\mathcal{C}^1\left(W^{s,p}_{r}(\mathbb{R}^N), \mathbb{R}\right)$
associated with problem \eqref{Prob1}  is defined by
$$I(u)=\frac{1}{p}\|u\|_{s,p}^p-\int_{\mathbb{R}^N}{F(u) \dd x}.$$
Since this functional has derivative
$$I'(u)\cdot v= \langle (-\Delta_p)^s u, v\rangle + \int_{\mathbb{R}^N}{|u|^{p-2}uv \dd x} - \int_{\mathbb{R}^N}^{}{f(u)v \dd x},$$ we see that critical points
of $I(u)$ are weak solutions to \eqref{Prob1}.

Our main result is the following
\begin{theorem}\label{teo1} Let  $s\in(0,1)$, $p\in [2,+\infty)$, $sp\leq N$ $(N\geq 2)$ and $g(t)=-|t|^{p-2}t+f(t)$. Then \eqref{Prob1} has infinitely many radially symmetric solutions $(u_n)_{n\in \mathbb{N}}$ such that $I(u_n) \to \infty$ as $n \to \infty$ if
    \begin{enumerate}
        \item [$(i)$] $sp=N$ and the function $f$ satisfies $(f1)$ and $(f3) - (f4)$;
        \item [$(ii)$] $sp<N$ and the function $f$ satisfies $(f1) - (f3)$.
    \end{enumerate}
    The same result is valid in the limit case $s=1$.
\end{theorem}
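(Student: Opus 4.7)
The plan is to apply Rabinowitz's symmetric mountain pass theorem via Krasnosel'ski\u{\i}'s genus to the even $\mathcal{C}^1$ functional $I$ on the reflexive Banach space $W^{s,p}_{r}(\mathbb{R}^N)$. Since $f$ is odd, $F$ is even and $I(-u)=I(u)$; by the principle of symmetric criticality, the search for weak solutions of \eqref{Prob1} is reduced to the search for critical points of $I$ on $W^{s,p}_{r}(\mathbb{R}^N)$.

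\textbf{Geometric estimates.} From $(f1)$ together with $(f2)$ (if $sp<N$) or $(f4)$ (if $sp=N$), one obtains a pointwise bound $|F(t)|\leq \frac{\varepsilon}{p}|t|^p + C_\varepsilon\Phi(t)$, where $\Phi(t)=|t|^r$ for some $r\in(p,p_{s}^{*})$ in the polynomial case, and $\Phi(t)=|t|^r\bigl(e^{\alpha|t|^{N/(N-s)}}-R_m(\alpha|t|^{N/(N-s)})\bigr)$ with $\alpha<\alpha_0$ and $R_m$ a partial Taylor sum, in the exponential case. Combined with the continuous Sobolev embedding (and the fractional Moser--Trudinger inequality when $sp=N$), this yields $\rho,\delta>0$ with $I(u)\geq\delta$ whenever $\|u\|_{s,p}=\rho$. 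On the other hand, $(f3)$ (together with the oddness of $f$) gives $F(t)\geq\frac{\mu}{q}|t|^q$ on $\mathbb{R}$, so on any finite-dimensional subspace $E_k\subset W^{s,p}_{r}(\mathbb{R}^N)$ all norms are equivalent and, since $q>p$, one has $I(u)\to -\infty$ as $\|u\|_{s,p}\to\infty$ in $E_k$. Setting
\[
c_k=\inf_{A\in\Gamma_k}\max_{u\in A} I(u),\qquad \Gamma_k=\{A\subset W^{s,p}_{r}(\mathbb{R}^N)\setminus\{0\}:\ A\ \text{compact, symmetric},\ \gamma(A)\geq k\},
\]
the symmetric mountain pass geometry forces $0<c_k\to\infty$.

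\textbf{Compactness, the main obstacle.} One must verify that $I$ satisfies the Cerami condition at each $c_k$. Since $(f3)$ does not provide the Ambrosetti--Rabinowitz condition, and since the Poho\v{z}aev identity exploited in \cite{HIT,VA} is unavailable for $(-\Delta_p)^s$, I would follow the strategy of Zhang--Chen \cite{Zhang}: introduce a truncation $f_K$ of $f$ agreeing with $f$ on $[-K,K]$ and globally Sobolev-subcritical, and work with the corresponding functional $I_K$. Cerami sequences for $I_K$ are bounded via the standard combination of $I_K(u_n)$ with $I_K'(u_n)\cdot u_n$; the compact radial embedding $W^{s,p}_{r}(\mathbb{R}^N)\hookrightarrow L^q(\mathbb{R}^N)$ for $p<q<p_{s}^{*}$ and the $(S_+)$ property of $(-\Delta_p)^s$ on the radial subspace then upgrade weak convergence to strong convergence. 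Applying the $\mathbb{Z}_2$-symmetric mountain pass theorem to $I_K$ yields critical points $u_k^K$ of $I_K$ with $I_K(u_k^K)=c_k^K\to\infty$. A uniform $L^\infty$ estimate (via Moser iteration adapted to the fractional $p$-Laplacian) allows one to choose $K=K(k)$ large enough so that $|u_k^K|\leq K$, whence $f_K(u_k^K)=f(u_k^K)$, producing genuine solutions $u_k$ of \eqref{Prob1} with $I(u_k)\to\infty$.

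The limit case $s=1$ proceeds by the same scheme on $W^{1,p}_{r}(\mathbb{R}^N)$, the Gagliardo seminorm being replaced by $\int|\nabla u|^p\,\dd x$ and the classical compact radial Sobolev embeddings (Strauss), together with the standard Moser--Trudinger inequality when $p=N$, replacing their fractional counterparts. The hard part is the compactness step: without AR one must obtain bounded Cerami sequences, upgrade weak to strong convergence despite the nonlinearity of the operator, and in the critical regimes rule out concentration---through a level estimate below $\frac{1}{N}S^{N/(sp)}$ for the best fractional Sobolev constant in the polynomial case, and via the fractional Moser--Trudinger inequality in the exponential case.
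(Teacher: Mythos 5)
Your overall framework coincides with the paper's: an even $\mathcal{C}^1$ functional $I$ on $W^{s,p}_r(\mathbb{R}^N)$, symmetric mountain pass geometry derived from $(f1)$ together with $(f2)$ or $(f4)$ for the linking estimate near $0$ and from $(f3)$ for negativity on large finite-dimensional sets, and then a verification of compactness. Where you diverge sharply is at the decisive step — showing that Palais–Smale (or Cerami) sequences are bounded in the absence of the Ambrosetti–Rabinowitz condition and of a Poho\v{z}aev identity — and it is precisely there that your sketch has a gap.

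You announce that you "follow the strategy of Zhang–Chen," but what you then describe (truncate $f$ to a subcritical $f_K$, run the symmetric mountain pass for $I_K$, obtain uniform $L^\infty$ bounds by Moser iteration, choose $K=K(k)$ so that the truncation is inactive) is not that strategy. The Zhang–Shen scheme that the paper actually uses is a direct contradiction argument: normalize $v_j=u_j/\|u_j\|_{s,p}$, extract $v_j\rightharpoonup v$, split into the cases $v\ne 0$ (handled by Fatou and $(f3)$: the superlinear lower bound $F(t)\ge \mu|t|^q/q$ with $q>p$ forces $\int F(u_j)/\|u_j\|^p\to\infty$, contradicting $I(u_j)\to c$) and $v=0$ (handled by dividing $I'(u_j)\phi=o(1)\|\phi\|$ by $\|u_j\|^{p-1}$, identifying $f(u_j)/\|u_j\|^{p-1}$ with an $L^{q'}$ function via Hahn–Banach/Riesz, and showing $\|v_j\|_{s,p}\to 0$, contradicting $\|v_j\|_{s,p}=1$). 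No truncation and no Moser iteration enter.

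The gap in your alternative is concrete. You claim Cerami sequences for $I_K$ are bounded "via the standard combination of $I_K(u_n)$ with $I_K'(u_n)\cdot u_n$." That combination produces boundedness only when the nonlinearity satisfies an Ambrosetti–Rabinowitz–type inequality $\theta F_K(t)\le f_K(t)t$ for some $\theta>p$. But a truncation of $f$ that agrees with $f$ on $[-K,K]$ inherits whatever failure of AR the original $f$ has on that interval, and outside $[-K,K]$ a bounded cutoff gives $F_K$ growing linearly while $f_K t$ stays linear, so AR still fails. You have not said how $f_K$ is built so that the combination closes, and without that the whole truncation loop does not start. A second, subsidiary concern: the energy levels $c_k^K$ (and hence the $W^{s,p}$-norms of the critical points $u_k^K$) grow with $k$, so the Moser-iteration $L^\infty$ bound depends on $k$; you would need a careful and explicit choice of $K(k)$ together with an argument that the minimax levels of $I$ and $I_{K(k)}$ agree, none of which is sketched. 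I recommend replacing the truncation scheme by the normalization/contradiction argument above, which resolves the boundedness issue directly from $(f1)$–$(f4)$ without touching AR.
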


We also consider the existence of a ground state solution for problem \eqref{Prob1}. Since we are looking for positive solutions, we will assume $f(t)=0$ if $t\leq 0$.

We need an additional hypotheses and also to modify hypotheses ($f3$).
\begin{enumerate}
    \item [$(f3)_{gs}$] $f(t)\geq
    \mu t^{q-1}$, for all $t\geq 0$, where
    \begin{itemize}
        \item if $sp=N$, than $q >p$ and $\mu>\mu^*$, where $$\mu^*:=\left(\frac{q-p}{pq}\right)^{(q-p)/p}C^{q/p}_q.$$
        \item if $sp<N$, than $q\in (p, p_{s}^*)$ and $\mu>\mu^*$, where $$\mu^*=\left[ p^{(sp-N)/sp}S^{-N/sp}\frac{N}{s}
        \left(\frac{pN}{N-sp}\right)^{(N-sp)/sp} \right]^{(q-p)/p}\left(\frac{q-p}{pq}\right)^{(q-p)/p}C^{q/p}_q;$$
        \end{itemize}

    \item [$(f5)$] $H(t)=f(t)t-pF(t) \geq 0$, for every $t> 0$, where
    $F(t)=\int_{0}^{t}f(\tau)\dd \tau\geq 0$.
\end{enumerate}
\
\begin{theorem}\label{Thgs}Let $N\geq 2$. The problem \eqref{Prob1} admits a  non-negative, radially symmetric and decreasing ground state
    solution  if
    \begin{enumerate}
        \item $sp = N, \ N\geq 2$ and $f$ satisfies $(f1)$ and $ (f3)_{gs} - (f4)$;
        \item $sp < N, \ N\geq 2$ and $f$ satisfies $(f1)-(f2)-(f3)_{gs}-(f5)$.
    \end{enumerate}
\end{theorem}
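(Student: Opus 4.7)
The plan is to produce the ground state by a mountain-pass minimization and then pass to the Schwarz symmetric decreasing rearrangement to recover the qualitative properties. First, I verify that $I$ has mountain-pass geometry on $W^{s,p}_r(\mathbb{R}^N)$: using $(f1)$ together with $(f2)$ (resp.\ $(f4)$) and the continuous embedding $W^{s,p}\hookrightarrow L^p\cap L^{p^*_s}$ (resp.\ the fractional Moser--Trudinger inequality when $sp=N$), one obtains $I(u)\geq\delta>0$ on a small sphere $\|u\|_{s,p}=\rho$; and $(f3)_{gs}$ gives some $v\in W^{s,p}_r$ with $\|v\|_{s,p}>\rho$ and $I(v)<0$ by testing $I$ along a rescaled non-negative radial bump. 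The Principle of Symmetric Criticality guarantees that critical points of $I|_{W^{s,p}_r}$ are critical points of the full functional.

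The crux of the argument is the sharp energy bound $c<c^{*}$, where $c$ is the mountain-pass level and $c^{*}$ is the compactness threshold governed in the subcritical case by the best fractional Sobolev constant $S$ (roughly $c^{*}=\tfrac{s}{N}S^{N/(sp)}$) and in the critical case $sp=N$ by the Moser--Trudinger exponent $\alpha_{0}$. Testing $I$ along $t\mapsto t u_q$, where $u_q\in W^{s,p}_r$ is a near-extremal for the embedding $W^{s,p}_r\hookrightarrow L^q$ with constant $C_q$, and using $f(t)\geq \mu t^{q-1}$ from $(f3)_{gs}$, one computes $\max_{t>0}I(tu_q)$ explicitly; the formula for $\mu^{*}$ in $(f3)_{gs}$ is engineered so that $\mu>\mu^{*}$ is \emph{precisely} what forces this maximum strictly below $c^{*}$.

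Next, I extract a Cerami sequence $(u_n)$ at level $c$. For boundedness, in the subcritical case $(f5)$ yields
$$I(u_n)-\tfrac{1}{p}\langle I'(u_n),u_n\rangle=\tfrac{1}{p}\int_{\mathbb{R}^N}H(u_n)\,\dd x\geq 0,$$
and a Jeanjean-type rescaling rules out the unbounded alternative; in the critical case, the exponential decay in $(f4)$ combined with the bound on $c$ suffices. The compact embedding $W^{s,p}_r\hookrightarrow L^q$ for $q\in(p,p^{*}_s)$ then produces a weak limit $u$ along a subsequence, with $\int F(u_n)\to\int F(u)$ and $\int f(u_n)u_n\to\int f(u)u$; the strict inequality $c<c^{*}$ excludes concentration at the critical exponent, so $u_n\to u$ strongly in $W^{s,p}_r$, and $u$ is a nontrivial critical point with $I(u)=c$.

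Finally, I replace $u$ by the Schwarz symmetric decreasing rearrangement $u^{*}$ of $|u|$. The Polya--Szegő inequality for the Gagliardo seminorm gives $[u^{*}]_{s,p}\leq[u]_{s,p}$, while $\|u^{*}\|_p=\|u\|_p$ and, since $F(t)\equiv 0$ for $t\leq 0$ by the convention in the ground-state case, $\int F(u^{*})=\int F(|u|)\geq\int F(u)$; hence $I(u^{*})\leq I(u)=c$, and by minimality $u^{*}$ is itself a ground state which is non-negative, radial and radially non-increasing. The main obstacle is the sharp estimate $c<c^{*}$: beyond the careful choice of test function, the critical exponential regime requires a delicate interplay between $\alpha_{0}$, $C_q$ and $\mu^{*}$, which is exactly the reason for the explicit form of $\mu^{*}$ in $(f3)_{gs}$.
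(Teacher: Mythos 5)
Your proposal takes a genuinely different route from the paper's. The paper does not do a direct mountain-pass minimization: it carries over the framework of Alves--Figueiredo--Siciliano \cite{AFS}, minimizing $T(u)=\tfrac1p[u]_{s,p}^p$ on the constraint manifold $\mathcal{M}=\{u:\int G(u)=1\}$ (resp.\ $\{u:\int G(u)=0\}$ when $sp=N$), applying Ekeland's variational principle to produce a minimizing sequence with Lagrange multipliers, and then rescaling the minimizer by a dilation to obtain a solution of \eqref{Prob1}. The radial, non-negative, decreasing structure is not obtained a posteriori by rearranging a found critical point; it is built into the minimizing sequence (AFS Lemma~2.6), where rearrangement is legitimate because the constraint $\int G(u)=1$ only improves under Schwarz symmetrization while $T$ decreases. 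Crucially, for $sp<N$ the paper identifies the ground state level with the constrained minimum via the Poho\v{z}aev identity for $(-\Delta_p)^s$ and the set $\mathcal{P}$; the paper even flags that this identity is ``not fully proved'' for the fractional $p$-Laplacian, so its use is a conscious and explicitly stated limitation.

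Your version has two concrete gaps. First, at the end you assert that since $I(u^{*})\leq I(u)=c$, ``by minimality $u^{*}$ is itself a ground state.'' This does not follow: the ground state level $m$ is an infimum over the set of nontrivial critical points $\Sigma$, and $u^{*}$ is not known to lie in $\Sigma$. Having $I(u^{*})\leq c$ where $c$ is the mountain-pass level gives neither $u^{*}\in\Sigma$ nor $I(u^{*})=m$; to make this work one would either need the equality case in the fractional Polya--Szeg\H{o} inequality (to force $|u|$ to already be radially decreasing and then argue $u\geq 0$), or one should do the rearrangement inside a minimizing/constrained problem, as the paper does. Second, nothing in your argument shows that the mountain-pass critical value $c$ equals the ground state level $m$, i.e.\ that every nontrivial critical point has energy $\geq c$. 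In the paper this is exactly where the Poho\v{z}aev identity enters for $sp<N$ (any solution lies on $\mathcal{P}$, and $\tilde p\leq b$). By silently dropping Poho\v{z}aev you remove the only tool available to compare $c$ with $\inf_{\Sigma}I$, so the constructed solution is not shown to be a ground state. These gaps need to be filled before your mountain-pass route is a complete proof; the paper's constrained-minimization route avoids both at the stated cost of invoking the (not fully established) fractional Poho\v{z}aev identity.
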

The proof of Theorem \ref{Thgs} requires only minor changes in results obtained by Alves, Figueiredo and Siciliano \cite{AFS}. In the case $sp<N$, we need to apply the Poho\v{z}aev identity for the fractional $p$-Laplacian operator, a result which is not fully proved.
\section{Preliminaries}
We denote by $\|\cdot\|_q$ the usual norm in $L^q(\mathbb{R}^N)$.

For the reader's convenience, we recall the main results about the
Trudinger-Moser inequalities for the space
$W_{r}^{s,p}(\mathbb{R}^N)$. We start stating a important result due
to T. Ozawa \cite{TO}.
\begin{theorem}
    Let $s=\frac{N}{p}\in(0,1)$ and $1 < p < \infty$ satisfy
    $\frac{1}{s} + \frac{1}{p}= 1$. Then, there exist positive constants
    $\gamma$ and $C_\gamma$ such that, for all $u\in
    W_{r}^{s,p}(\mathbb{R}^N)$ satisfying $\|u\|_{s,p}\leq 1$, it holds
    \begin{equation}\label{intTO}
        \int_{\mathbb{R}^N}^{}{\left( e^{\gamma |u(x)|^{p'}} -
            \displaystyle\sum_{0\leq j <p,
                j\in\mathbb{N}}{\frac{1}{j!}{\left(\gamma|u(x)|^{p'}\right)^j}}
            \right)\dd x} \leq C_\gamma\|u\|_{p}^{p}
    \end{equation}
\end{theorem}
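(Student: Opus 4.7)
The plan is to follow the Bessel-potential approach of Ozawa. First, by density and symmetrisation I would reduce to radial, positive $u\in C_c^\infty(\mathbb{R}^N)$, and exploit the identification of $W^{s,p}(\mathbb{R}^N)$ at the critical exponent $sp=N$ with a Bessel potential space: writing $u = G_s*v$, where $G_s$ is the Bessel kernel of order $s$, one has $\|v\|_p$ equivalent to $\|u\|_{s,p}$, so after renormalising we may assume $\|v\|_p\le 1$.

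Next, the key analytic input is a pointwise estimate for the symmetric decreasing rearrangement $u^*$ in terms of $v$. In the critical case $sp=N$ the kernel $G_s$ has a logarithmic singularity at the origin and exponential decay at infinity. Combining these facts with O'Neil's rearrangement inequality for convolutions yields a bound of the form
$$u^*(r) \le A\bigl(\log \tfrac{1}{r}\bigr)^{1/p'} + B\|v\|_p\qquad (r\to 0^+),$$
with $A$ depending only on $N$ and $p$. Equivalently, one obtains the super-level set estimate $|\{|u|>t\}| \le C_1 e^{-\gamma t^{p'}}$ for $t$ large, where $\gamma>0$ is precisely the constant appearing in the statement.

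Then I would expand the integrand as
$$e^{\gamma|u|^{p'}} - \sum_{0\le j <p}\frac{(\gamma|u|^{p'})^j}{j!} \;=\; \sum_{j\ge p,\, j\in\mathbb{N}} \frac{(\gamma|u|^{p'})^j}{j!},$$
integrate term by term, and estimate each $\int_{\mathbb{R}^N} |u|^{jp'}\,dx$ via the layer-cake formula. Splitting at some $t_0>0$, one controls the small-$t$ part by Chebyshev's inequality $|\{|u|>t\}|\le \|u\|_p^p/t^p$ (valid because $jp'>p$ for every surviving index $j\ge p$), and the large-$t$ part by the exponential super-level set bound. Summing in $j$ and using Stirling's formula to dominate $1/j!$ against the resulting combinatorial prefactors yields a convergent series bounded by $C_\gamma\|u\|_p^p$.

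The main obstacle is the matching of the cutoff $j<p$ on the subtracted Taylor sum to the available analytic estimates: the boundary index $j=p-1$ gives $jp'=p$, so the Chebyshev integral $\int_0^{t_0}t^{jp'-p-1}\,dt$ just fails to converge, forcing the subtraction of exactly the terms with $j<p$. Making the constant $\gamma$ sharp requires quantitatively precise control of the logarithmic singularity of $G_s$ and a careful combinatorial accounting ensuring that the polynomial growth of the Sobolev constants $C(q)=O(q^{1/p'})$ is dominated by the factorials uniformly for $\|u\|_{s,p}\le 1$.
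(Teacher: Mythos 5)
This result is quoted from Ozawa \cite{TO}; the paper does not give a proof of it, so there is nothing internal to compare against. On its own merits, your sketch would establish a Trudinger--Moser inequality for the \emph{Bessel-potential} space $H^{s,p}(\mathbb{R}^N)=(I-\Delta)^{-s/2}L^p(\mathbb{R}^N)$, which is indeed the setting of Ozawa's original theorem, but it does not directly yield the statement for the Gagliardo space $W^{s,p}(\mathbb{R}^N)$ as formulated in the paper.

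The gap is in the very first reduction. You assume that $W^{s,p}(\mathbb{R}^N)$ at the critical exponent $sp=N$ can be identified with a Bessel potential space, writing $u=G_s*v$ with $\|v\|_p$ equivalent to $\|u\|_{s,p}$. For $0<s<1$ and $p\neq 2$ this identification is false: $W^{s,p}$ coincides with the Besov space $B^s_{p,p}$ while $H^{s,p}$ coincides with the Triebel--Lizorkin space $F^s_{p,2}$, and these agree only when $p=2$. In the regime $p\geq 2$ used throughout this paper one has the strict, one-sided continuous inclusion $H^{s,p}\hookrightarrow W^{s,p}$, so an estimate proved under $\|u\|_{H^{s,p}}\leq 1$ does not control all $u$ with $[u]_{s,p}^p+\|u\|_p^p\leq 1$; the needed comparison $\|u\|_{H^{s,p}}\lesssim\|u\|_{W^{s,p}}$ simply fails. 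To get the stated inequality one must work directly with the Gagliardo double integral (for instance by a dyadic decomposition of $\iint|u(x)-u(y)|^p|x-y|^{-2N}\,\dd x\,\dd y$ as in Parini--Ruf or Martinazzi), or else change the hypothesis on the norm. The downstream steps of your argument — Taylor expansion of $e^{\gamma|u|^{p'}}$, term-by-term layer-cake integration with Chebyshev on the small-$t$ range and the exponential super-level-set bound on the large-$t$ range, and Stirling to sum in $j$ — are sound in the Bessel setting, and your observation that the subtracted Taylor polynomial must stop exactly at $j=p-1$ because $jp'=p$ makes the Chebyshev integral borderline is precisely the right explanation. Note also that the O'Neil rearrangement route you describe is Adams' strategy rather than Ozawa's (Ozawa instead tracks the polynomial growth of the embedding constants $\|u\|_q\lesssim q^{1/p'}\|u\|_{H^{s,p}}$ and sums the series directly); this is a stylistic deviation, not an error.
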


The next result is due to Adachi and Tanaka \cite{AT}.
\begin{theorem}\label{MTNp} If $\omega_{p-1}$ denotes the surface area of the unit sphere in $\mathbb{R}^p$
    and $\alpha_p=p\omega_{p-1}^{1/(p-1)}$, then, for any $N\geq 2$ and
    any $\alpha \in (0, \alpha_p)$,  there exists a constant
    $C_{\alpha}>0$ such that
    \begin{equation}\label{intAT}
        \int_{\mathbb{R}^p}^{}{\Phi_p\left( \alpha
            \left(\frac{|u(x)|}{\|\nabla u\|_p} \right)^{p'} \right)\dd x}\leq
        C_{\alpha} \frac{\|u\|_{p}^p}{\|\nabla u\|_{p}^p}, \ \  u\in
        W_{r}^{1,p}(\mathbb{R}^p)\backslash \{ 0 \},
    \end{equation}
    where
    $$\Phi_p(\eta)=e^{\eta}-\displaystyle\sum_{j=0}^{p-2}{\frac{1}{j!}\eta^j}.$$
\end{theorem}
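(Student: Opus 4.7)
The plan is to follow the classical rearrangement-and-splitting strategy of Adachi--Tanaka. First, by the scaling $u\mapsto \lambda u$ both sides transform identically, so I may normalise $\|\nabla u\|_p=1$ and set $M:=\|u\|_p^p$; the goal becomes $\int_{\mathbb{R}^p}\Phi_p(\alpha |u|^{p'})\,\dd x\le C_\alpha M$. Since $\Phi_p$ is nondecreasing on $[0,\infty)$ and $W^{1,p}$-norms decrease under symmetric-decreasing rearrangement, Pólya--Szegő lets me assume further that $u$ is radial and nonincreasing.

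The main technical tool is the Strauss-type decay estimate
\[
|u(r)|^p \le \frac{p}{\omega_{p-1}}\, r^{1-p}\, \|u\|_p^{p-1}\, \|\nabla u\|_p,
\]
obtained from $|u(r)|^p = p\int_r^\infty |u|^{p-2}u(-u')\,\dd t$, Hölder's inequality, and the spherical-coordinates identity $\int_{|x|>r}|v|^p\,\dd x = \omega_{p-1}\int_r^\infty |v(t)|^p t^{p-1}\,\dd t$. I then split $\mathbb{R}^p = B_R\cup B_R^c$ with the radius $R$ chosen so that $R^p\sim M$.

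On the exterior the decay bound forces $\alpha|u(x)|^{p'}\le K$ uniformly for $|x|\ge R$, with $K$ depending only on $\alpha$. Because $\Phi_p(\eta)=\sum_{j\ge p-1}\eta^j/j!$ satisfies $\Phi_p(\eta)\le C_K\eta^{p-1}$ on $[0,K]$, and since $(p-1)p'=p$, this yields
\[
\int_{B_R^c}\Phi_p(\alpha|u|^{p'})\,\dd x\le C_\alpha\int_{B_R^c}|u|^p\,\dd x\le C_\alpha M.
\]
On the interior, I set $u_R:=(u-u(R))_+$, which lies in $W_0^{1,p}(B_R)$ with $\|\nabla u_R\|_p\le 1$, and invoke Moser's classical exponential inequality on bounded domains in $\mathbb{R}^p$ to obtain $\int_{B_R}\Phi_p(\beta|u_R|^{p'})\,\dd x\le C|B_R|\le CM$ for any $\beta<\alpha_p$.

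I expect the main obstacle to be transferring the interior bound from $u_R$ back to $u$. Writing $|u|\le|u_R|+|u(R)|$ and using the Young-type inequality $(a+b)^{p'}\le(1+\delta)a^{p'}+C_\delta b^{p'}$ inflates $\alpha$ inside $\Phi_p$ by a factor $1+\delta$; only because the hypothesis $\alpha<\alpha_p$ is \emph{strict} can I choose $\delta>0$ small enough that $(1+\delta)\alpha$ remains below $\alpha_p$, keeping Moser's inequality applicable. This is precisely why the endpoint $\alpha=\alpha_p$ is out of reach by this method. Combining the interior and exterior estimates then yields the inequality with $C_\alpha$ depending only on $\alpha$.
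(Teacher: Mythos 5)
The paper does not prove this theorem; it is quoted verbatim from Adachi and Tanaka \cite{AT} (with $N$ renamed to $p$), so there is no in-paper proof to compare against. Your proposal is a correct and essentially complete reconstruction of the Adachi--Tanaka argument: normalize $\|\nabla u\|_p=1$, reduce to radially nonincreasing $u$ via rearrangement, prove the Strauss-type pointwise decay $|u(r)|^p\le \frac{p}{\omega_{p-1}}r^{1-p}\|u\|_p^{p-1}\|\nabla u\|_p$, split at $R\sim M^{1/p}$, bound the exterior integral by the polynomial tail of $\Phi_p$ (using $(p-1)p'=p$ to land on $\|u\|_p^p$), bound the interior by truncating to $u_R=(u-u(R))_+\in W_0^{1,p}(B_R)$ and applying Moser's bounded-domain inequality, and absorb the constant shift $u(R)$ (which the decay estimate makes uniformly bounded) via $(a+b)^{p'}\le(1+\delta)a^{p'}+C_\delta b^{p'}$, using the strict hypothesis $\alpha<\alpha_p$ to keep $(1+\delta)\alpha\le\alpha_p$. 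That final observation is exactly the reason the constant $\alpha_p$ is excluded in the statement, and you have correctly identified it as the crux.
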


\begin{remark}\label{RemarkTM} Note that, if $\|\nabla u\|_p\leq M$, the inequality \eqref{intAT} can be written into the form \eqref{intTO}.
\end{remark}

\begin{lemma}\label{limit}If $p'=\frac{p}{p-1}=\frac{N}{N-s}$, let us define
$$Q(t):=e^{\alpha |t|^{p'}}-T(t)\quad\text{and}\quad T(t)=\displaystyle\sum_{0\leq j <p, j\in \mathbb{N}}{\frac{1}{j!}{\left(\alpha |t|^{p'}\right)^j}}.$$
Then, in the case $sp=N$,
\begin{enumerate}
    \item [$(i)$] $(f4)$ implies $\displaystyle\lim_{ |t| \to +\infty}\frac{f(t)}{Q(t)}=0$;
    \item [$(ii)$] $(f3)$ and $(f4)$ imply $\displaystyle\lim_{ |t| \to +\infty}\frac{F(t)}{Q(t)}=0$.
\end{enumerate}
\end{lemma}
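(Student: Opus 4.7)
The plan is to treat the two assertions in sequence, exploiting that the truncating polynomial $T(t)$ is negligible next to $e^{\alpha|t|^{p'}}$, so that $Q(t)$ is asymptotically equivalent to $e^{\alpha|t|^{p'}}$.

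For $(i)$, observe that $T(t)=\sum_{0\le j<p}\frac{1}{j!}(\alpha|t|^{p'})^j$ is a polynomial of degree $(p-1)p'$ in $|t|$, so $T(t)/e^{\alpha|t|^{p'}}\to 0$ as $|t|\to\infty$, which yields $Q(t)/e^{\alpha|t|^{p'}}\to 1$. Choosing the constant $\alpha$ appearing in $Q$ to satisfy $\alpha>\alpha_0$, hypothesis $(f4)$ gives $f(t)/e^{\alpha\,t^{p'}}\to 0$ as $t\to+\infty$, and oddness of $f$ extends this to $|t|\to\infty$. Multiplying the two ratios yields the claim.

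For $(ii)$, since $f$ is odd, $F$ is even, so it suffices to consider $t\to+\infty$; moreover $(f3)$ guarantees $F(t)\ge 0$ on $[0,+\infty)$, so only an upper bound is required. Choose $\alpha_1$ with $\alpha_0<\alpha_1<\alpha$. By $(f4)$, for every $\epsilon>0$ there is $R>0$ with $f(\tau)\le \epsilon\,e^{\alpha_1\tau^{p'}}$ for $\tau\ge R$, while $f$ is bounded by some constant on $[0,R]$ by continuity. Splitting $F(t)=\int_0^R f+\int_R^t f$ and using that $\frac{\dd}{\dd\tau}e^{\alpha_1\tau^{p'}}=\alpha_1 p'\tau^{p'-1}e^{\alpha_1\tau^{p'}}$, a single integration by parts produces, for $t$ large,
$$\int_R^t e^{\alpha_1\tau^{p'}}\,\dd\tau \;\le\; \frac{C\,e^{\alpha_1 t^{p'}}}{t^{p'-1}}.$$
Therefore $F(t)\le C_R + C\epsilon\, t^{-(p'-1)}e^{\alpha_1 t^{p'}}$, and dividing by $Q(t)\sim e^{\alpha t^{p'}}$ gives
$$\frac{F(t)}{Q(t)}\;\le\; \frac{C_R}{Q(t)} + \frac{C\epsilon}{t^{p'-1}}\,e^{-(\alpha-\alpha_1)t^{p'}},$$
both of which tend to zero since $\alpha_1<\alpha$.

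The main obstacle is the sharp asymptotic control of $\int_R^t e^{\alpha_1\tau^{p'}}\,\dd\tau$: one needs to extract the correct leading factor $e^{\alpha_1 t^{p'}}/(\alpha_1 p' t^{p'-1})$ and verify that the strict gap $\alpha-\alpha_1>0$ supplies a super-polynomial decay $e^{-(\alpha-\alpha_1)t^{p'}}$ that annihilates the polynomial factor $t^{-(p'-1)}$ when dividing by $Q(t)$. Everything else is a consequence of the elementary fact that polynomials are dominated by $e^{\alpha|t|^{p'}}$ at infinity.
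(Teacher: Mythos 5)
Your proof is correct. Part $(i)$ coincides with the paper's argument: both isolate the fact that $T$ is a polynomial, hence $Q(t)/e^{\alpha|t|^{p'}}\to 1$, and then invoke $(f4)$ with $\alpha>\alpha_0$.

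For part $(ii)$ you take a genuinely different route. The paper uses $(f3)$ to conclude $F(t)\to+\infty$ and then applies L'H\^opital's rule,
\[
\lim_{|t|\to\infty}\frac{F(t)}{e^{\alpha|t|^{p'}}}=\lim_{|t|\to\infty}\frac{f(t)}{\frac{\dd}{\dd t}e^{\alpha|t|^{p'}}}=\lim_{|t|\to\infty}\frac{1}{\alpha p' |t|^{p'-1}}\cdot\frac{f(t)}{e^{\alpha|t|^{p'}}}=0,
\]
with the final equality again coming from $(f4)$. You instead insert an intermediate exponent $\alpha_1\in(\alpha_0,\alpha)$, use $(f4)$ to dominate $f$ by $\epsilon\,e^{\alpha_1\tau^{p'}}$ on $[R,\infty)$, integrate, and let the genuine exponential gap $e^{-(\alpha-\alpha_1)t^{p'}}$ kill everything. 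This is a more hands-on argument and does not rely on the L'H\^opital hypotheses (in particular, you use $(f3)$ only cosmetically, to note $F\ge 0$; the same computation would control $|F|$). One small observation: the sharp asymptotic $\int_R^t e^{\alpha_1\tau^{p'}}\dd\tau\le C e^{\alpha_1 t^{p'}}/t^{p'-1}$ that you flag as the ``main obstacle'' is more than is actually needed. The crude bound $\int_R^t e^{\alpha_1\tau^{p'}}\dd\tau\le (t-R)\,e^{\alpha_1 t^{p'}}$ already suffices, since the factor $e^{-(\alpha-\alpha_1)t^{p'}}$ annihilates any polynomial growth; the integration by parts is unnecessary refinement, and avoiding it would simplify the write-up.
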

\begin{proof}Take $\alpha>\alpha_0$. Since
$$\lim_{ |t| \to +\infty}\frac{Q(t)}{e^{\alpha t^{p'}}}=\lim_{ |t| \to
        +\infty}\left(1-\frac{T(t)}{e^{\alpha t^{p'}}}\right)=1,$$
it follows from $(f4)$ that
$$\lim_{ |t| \to +\infty}\frac{f(t)}{Q(t)} =\lim_{ |t| \to +\infty}\frac{f(t)/{e^{\alpha t^{p'}}}}{Q(t)/e^{\alpha t^{p'}}}=0,$$
proving ($i$).

Moreover, by $(f3)$ there exists $\mu >0$ and $q>p$ such that
$f(t)\geq \mu t^{q-1}, \ \forall\ t\geq 0.$ It follows that
$F(t)\geq \displaystyle\frac{\mu t^q}{q}, \ \forall\,t\geq 0.$ Since
$F$ is an even function, we have $F(t)\geq \mu |t|^q/q$, for all
$t\in \mathbb{R}$. Thus, $F(t) \to +\infty$ as $|t| \to +\infty$.
Hence,
$$\displaystyle\lim_{ |t| \to +\infty}\frac{F(t)}{e^{\alpha |t|^{p'}}} = \displaystyle\lim_{ |t| \to +\infty}\frac{f(t)}{\frac{d}{dt} e^{\alpha |t|^{p'}}}=0,$$ which implies

$$\displaystyle\lim_{ |t| \to +\infty}\frac{F(t)}{Q(t)} = \displaystyle\lim_{ |t| \to +\infty}\frac{F(t)/{e^{\alpha t^{p'}}}}{Q(t)/e^{\alpha t^{p'}}}=0.$$

\vspace*{-.7cm}\end{proof}

The next result is an estimate for $|F(t)|$ adequate for our purposes. Its proof is an immediate consequence of Lemma \ref{limit}.
\begin{proposition}\label{DesgExp}
Suppose that $(f1)$, $(f3)$ and $(f4)$ are valid. Given $q>\frac{N}{s}=p$ for all $\alpha>\alpha_0,$ there exists $\varepsilon>0, \ C>0,$ such that
$$|F(t)|\leq \frac{\varepsilon|t|^p}{p}+ C Q(t)|t|^q, \quad \forall t \in \mathbb{R}.$$
\end{proposition}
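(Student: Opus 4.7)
The hypotheses $(f1)$, $(f3)$, $(f4)$ only constrain $f$ (and hence $F$) near $t=0$ and near $|t|=\infty$, so the natural strategy is to split $\mathbb{R}$ into three regions — a neighborhood of zero, a neighborhood of infinity, and the compact annulus in between — and bound $|F|$ on each by one of the two summands on the right-hand side.

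Near the origin I would invoke $(f1)$: given $\varepsilon>0$, there is $\delta\in(0,1)$ with $|f(t)|\le \varepsilon|t|^{p-1}$ whenever $|t|\le\delta$. Integrating from $0$ to $t$, and using that $f$ is odd so $F$ is even with $F(0)=0$, immediately yields $|F(t)|\le \varepsilon|t|^{p}/p$ on $[-\delta,\delta]$, which is exactly the first summand.

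For $|t|\ge\delta$ the aim is a bound of the form $|F(t)|\le C\,Q(t)|t|^{q}$ with a single constant $C$. Lemma \ref{limit}(ii) supplies some $R\ge\delta$ such that $|F(t)|\le Q(t)$ for $|t|\ge R$; since $|t|^{q}\ge R^{q}$ there, this gives the bound on $\{|t|\ge R\}$ with constant $R^{-q}$. On the compact annulus $\delta\le |t|\le R$, continuity of $F$ yields $|F(t)|\le K$ for some $K$, while $Q(t)|t|^{q}$ is continuous and strictly positive off the origin, hence attains a positive minimum $m>0$ on the annulus; thus $|F(t)|\le (K/m)Q(t)|t|^{q}$ there. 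Taking $C:=\max(R^{-q},K/m)$ and adding the two nonnegative estimates gives the proposition.

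The only step requiring a moment of verification is the strict positivity of $Q$ on $\mathbb{R}\setminus\{0\}$, which is needed to get a positive lower bound for $Q(t)|t|^{q}$ on the annulus. This is not a genuine obstacle: $Q(t)=e^{\alpha|t|^{p'}}-\sum_{0\le j<p}(\alpha|t|^{p'})^{j}/j!$ is the tail past order $p-1$ of the Taylor series of $e^{\alpha|t|^{p'}}$, whose remaining terms are all nonnegative and whose first remaining term $(\alpha|t|^{p'})^{p}/p!$ is strictly positive for $t\ne 0$. In short, the whole proposition is a bookkeeping consequence of Lemma \ref{limit}(ii) combined with the behaviour of $F$ at $0$ supplied by $(f1)$.
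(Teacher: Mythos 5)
Your proposal is correct and is precisely the fleshing-out of what the paper leaves implicit: the authors simply remark that the Proposition is ``an immediate consequence of Lemma \ref{limit}'', and your three-region decomposition (origin via $(f1)$, infinity via Lemma \ref{limit}(ii), compact annulus via continuity and strict positivity of $Q$) is the standard way to extract the stated global bound from the two limit facts. One pedantic point: since $p=N/s\in[2,\infty)$ need not be an integer, the first omitted Taylor term of $Q$ is $(\alpha|t|^{p'})^{j_0}/j_0!$ where $j_0=\min\{j\in\mathbb{N}:j\ge p\}$ rather than literally index $p$; the conclusion $Q(t)>0$ for $t\neq 0$ is unchanged.
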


\section{Proof of Theorem \ref{teo1}}
Our first result simply adapts the arguments of Theorem 10 in
Berestycki-Lions \cite{BL} for the space
$W_{r}^{s,p}(\mathbb{R}^N)$. Let $\mathbb{S}^{n-1}$ stand for the
unit sphere in $\mathbb{R}^N$.
\begin{theorem}
    For all $n\in \mathbb{N},$ there exists an odd continuous map
    $\pi_n \colon \mathbb{S}^{n-1} \to W_{r}^{s,p}(\mathbb{R}^N)$ such that
    \begin{enumerate}
        \item [$(i)$] $\pi_n(\sigma)$ is radially symmetric in $\mathbb{S}^{n-1}$;
        \item [$(ii)$] $0 \not\in \pi_n(\mathbb{S}^{n-1})$;
        \item [$(iii)$] $\displaystyle\int_{\mathbb{R}^N}^{}{F(\pi_n(\sigma)) \dd x} \geq 1$, for all  $\sigma \in \mathbb{S}^{n-1}$.
    \end{enumerate}
\end{theorem}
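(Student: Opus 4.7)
The plan is to imitate the classical Berestycki--Lions construction (Theorem 10 of \cite{BL}) in the fractional radial Sobolev setting, noting that no specifically fractional input is required beyond the fact that smooth compactly supported radial bumps belong to $W^{s,p}_r(\mathbb{R}^N)$. The key algebraic observation is that $(f3)$, together with the oddness of $f$, forces
$$F(t)\geq \frac{\mu}{q}|t|^q\geq 0 \qquad \forall\, t\in\mathbb{R},$$
so contributions from pieces with disjoint supports add up without cancellation, and $\int F(\cdot)\,\dd x$ can be driven arbitrarily large by dilation.

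Fix any nontrivial radial bump $v\in C^\infty_c(\mathbb{R}^N)$ supported in the annulus $\{1\leq |x|\leq 2\}$, and for $i=1,\dots,n$ set $v_i(x):=v(x/3^{\,i-1})$. Each $v_i$ is a radial function in $W^{s,p}_r(\mathbb{R}^N)$, and the supports $\{3^{i-1}\leq |x|\leq 2\cdot 3^{i-1}\}$ are pairwise disjoint. For a parameter $R>0$ to be fixed later and $\sigma=(\sigma_1,\dots,\sigma_n)\in\mathbb{S}^{n-1}$, I would define
$$\pi_n(\sigma):=R\sum_{i=1}^{n}\sigma_i\, v_i.$$
This is a linear (hence continuous) odd map from $\mathbb{R}^n$ into the finite-dimensional subspace of $W^{s,p}_r(\mathbb{R}^N)$ spanned by $\{v_1,\dots,v_n\}$; radial symmetry is inherited pointwise, yielding (i). If $\pi_n(\sigma)\equiv 0$, then the disjointness of supports together with $v_i\not\equiv 0$ forces each $\sigma_i=0$, contradicting $|\sigma|=1$; this establishes (ii).

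For (iii), using disjointness of supports once more,
$$\int_{\mathbb{R}^N} F(\pi_n(\sigma))\,\dd x=\sum_{i=1}^{n}\int_{\mathbb{R}^N} F(R\sigma_i v_i)\,\dd x\geq \frac{\mu R^q}{q}\sum_{i=1}^{n}|\sigma_i|^q\int_{\mathbb{R}^N}|v_i|^q\,\dd x.$$
Since $|\sigma|=1$ implies $\max_i|\sigma_i|\geq n^{-1/2}$, hence $\sum_i|\sigma_i|^q\geq n^{-q/2}$, and since $\int |v_i|^q\,\dd x=3^{(i-1)N}\int |v|^q\,\dd x\geq c>0$, the lower bound simplifies to $\mu R^q c\, n^{-q/2}/q$, which is $\geq 1$ provided $R$ is chosen sufficiently large. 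I do not expect any real obstacle: the construction is indifferent to whether $sp=N$ or $sp<N$ (only $(f3)$ is invoked), and the only mildly delicate point—the quantitative comparison between the Euclidean constraint $|\sigma|_2=1$ and the sum $\sum |\sigma_i|^q$—is dispatched by the elementary pigeonhole bound $\max_i|\sigma_i|\geq n^{-1/2}$.
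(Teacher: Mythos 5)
Your construction is correct, and it takes a genuinely more self-contained route than the paper. The paper's proof is essentially a pointer: it shows the level set $V=\{v:\int F(v)\,\dd x=1\}$ is nonempty (via a coercivity/IVT argument for $J(w)=\int F(w)\,\dd x$) and then states that the arguments of Berestycki--Lions, Theorem 10, ``are easily adapted.'' That BL construction is the one you are implicitly reproducing --- disjointly supported radial bumps indexed by the coordinates of $\sigma$ --- but in BL the primitive of the nonlinearity can change sign, which forces a normalization by dilation (scaling $x\mapsto x/t$ so that the integral lands exactly on the constraint level, with some care where the primitive is negative). Your key simplification is to observe that here $(f3)$ plus oddness of $f$ yields the \emph{pointwise} bound $F(t)\geq \frac{\mu}{q}|t|^q\geq 0$ for all $t\in\mathbb{R}$, so the contributions from the disjointly supported pieces add without cancellation and a single amplitude parameter $R$ suffices to push $\int F(\pi_n(\sigma))\,\dd x$ above $1$ uniformly in $\sigma$, via $\sum_i|\sigma_i|^q\geq n^{-q/2}$. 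This replaces the BL dilation-and-sign bookkeeping with an elementary linear map into a finite-dimensional subspace, and it sidesteps the paper's somewhat tangential coercivity claim for $J$ (which is never actually used once one has an explicit $\pi_n$). The only point worth making explicit in a final write-up is that $F$ is even (so $F(R\sigma_i v_i)=F(R|\sigma_i|\,|v_i|)$ regardless of the sign of $\sigma_i$) and that $F(0)=0$, so the integral over the complement of the supports vanishes; both follow immediately from $f$ odd and continuous. Otherwise the argument is complete and works identically for $sp=N$ and $sp<N$, as you note, since only $(f3)$ is invoked.
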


This result follows by considering the subset $V\subset
W_{r}^{s,p}(\mathbb{R}^N)$ and the functional $J\colon
W_{r}^{s,p}(\mathbb{R}^N) \to \mathbb{R}$  given by
$$V=\left\{v\in W_{r}^{s,p}(\mathbb{R}^N)\,:\, \displaystyle\int_{\mathbb{R}^N}{F(v)\dd x}=1\right\},\qquad
J(w)= \displaystyle\int_{\mathbb{R}^N}{F(w) \dd x}.$$ It follows
easily that $J$ is continuous and ($f3$) implies that
%$J(w)\geq \displaystyle\int_{\mathbb{R}^N}^{}\frac{\lambda|w|^q}{q}\dd x$, proving that
$J$ is coercive. Thus, there exists $w_1\in
W_{r}^{s,p}(\mathbb{R}^N)$ such that $J(w_1)>1$. Let $\bar B_{\ell}$
be the closed ball centered at the origin with radius
$\ell=\|w_1\|_{s,p}$. It follows from the Intermediate Value Theorem
the existence of $v\in \bar B_{\ell}$ such that $J(v)=1$, proving
that $V\neq\emptyset$.

Now the arguments of \cite[Theorem 10]{BL} are easily adapted to the
space
 $W_{r}^{s,p}(\mathbb{R}^N)$.

\begin{lemma}\label{lemgeo}The geometry of the Symmetric  Mountain
Pass Theorem is valid. In fact, we have
\begin{enumerate}
\item[$(i)$] There are $\beta, \ \rho>0$ such that
$I(u) \geq \beta
>0$ for   $\|u\|_{s,p}=\rho$ and $I(u) \geq 0$ for $\|u\|_{s,p}\leq \rho\;;$

\item[$(ii)$] If  $n\in\mathbb{N},$ there exists an odd continuous mapping $\gamma_n \colon \mathbb{S}^{n-1}\to W_{r}^{s,p}(\mathbb{R}^N)$ such that
$I(\gamma_n(\sigma))<0, \forall \sigma\in\mathbb{S}^{n-1}$.
\end{enumerate}
\end{lemma}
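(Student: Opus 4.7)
The plan is to prove the two parts separately, splitting part (i) according to whether $sp < N$ or $sp = N$, and obtaining part (ii) by rescaling the map $\pi_n$ produced by the preceding theorem.

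\textbf{Part (i).} I aim for an estimate of the form
$$I(u) \geq \frac{1-\varepsilon}{p}\|u\|_{s,p}^p - C\|u\|_{s,p}^\theta$$
with some $\theta > p$; picking $\rho > 0$ small then yields both strict positivity on the sphere $\|u\|_{s,p} = \rho$ and non-negativity inside it. In the subcritical case, $(f1)$ and $(f2)$ give, for every $\varepsilon > 0$, a constant $C_\varepsilon$ with $|F(t)| \leq \frac{\varepsilon}{p}|t|^p + C_\varepsilon|t|^{p_s^*}$; integrating and invoking the continuous embedding $W_r^{s,p}(\mathbb{R}^N) \hookrightarrow L^{p_s^*}(\mathbb{R}^N)$ delivers the bound with $\theta = p_s^*$. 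In the critical case, Proposition \ref{DesgExp} gives $|F(t)| \leq \frac{\varepsilon}{p}|t|^p + CQ(t)|t|^q$; Hölder with conjugate exponents $r, r'$ applied to the last term, together with the Ozawa inequality \eqref{intTO} rescaled to $v := u/\|u\|_{s,p}$, yields $\int Q(u)^r \dd x \leq C\|u\|_p^p$ as soon as $r\alpha\|u\|_{s,p}^{p'}$ lies below the Ozawa threshold $\gamma$, which is guaranteed for $\|u\|_{s,p}$ small. Combining with $W_r^{s,p} \hookrightarrow L^{qr'}$ (every exponent $\geq p$ being admissible when $sp = N$) yields the required bound with $\theta = q + p/r > p$.

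\textbf{Part (ii).} Since $f$ is odd, $F$ is even, and $(f3)$ gives $F(t) \geq (\mu/q)|t|^q$ for all $t \in \mathbb{R}$, whence
$$I(t\pi_n(\sigma)) \leq \frac{t^p}{p}\|\pi_n(\sigma)\|_{s,p}^p - \frac{\mu t^q}{q}\|\pi_n(\sigma)\|_q^q.$$
Continuity of $\pi_n$ and compactness of $\mathbb{S}^{n-1}$, combined with the continuous embedding $W_r^{s,p} \hookrightarrow L^q$ (the case $q \leq p_s^*$ when $sp < N$ is forced by compatibility of $(f2)$ and $(f3)$; any $q \geq p$ works when $sp = N$), make $M := \max_\sigma \|\pi_n(\sigma)\|_{s,p}$ finite and $m := \min_\sigma \|\pi_n(\sigma)\|_q$ strictly positive, the latter because $0 \notin \pi_n(\mathbb{S}^{n-1})$. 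Since $q > p$, choosing $t_n$ sufficiently large forces the right-hand side to be negative uniformly in $\sigma$, and $\gamma_n(\sigma) := t_n\pi_n(\sigma)$ is evidently odd and continuous.

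The most delicate point I anticipate is the critical case of part (i): the radius $\rho$, the Hölder exponent $r$, and the value of $\alpha > \alpha_0$ must be coordinated so that $r\alpha\rho^{p'}$ stays below $\gamma$ while the resulting exponent $q + p/r$ remains strictly larger than $p$; the subcritical case and part (ii) are comparatively routine.
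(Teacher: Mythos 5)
Your part (i) follows essentially the same route as the paper: split $|F|$ via Proposition~\ref{DesgExp} when $sp=N$ (resp.\ via $(f1)$--$(f2)$ when $sp<N$), apply H\"older to the exponential tail and a Trudinger--Moser estimate for $\|u\|_{s,p}$ small. Whether one invokes Ozawa's inequality~\eqref{intTO} on $v=u/\|u\|_{s,p}$ or Adachi--Tanaka via Remark~\ref{RemarkTM} is immaterial. The coordination issue you worry about at the end is actually not there: for any fixed $r>1$ one has $q+p/r>p$ automatically (it is equivalent to $q>p/r'$, which holds since $q>p$), so one simply fixes $r$ and $\alpha>\alpha_0$ and then shrinks $\rho$ until $r\alpha\rho^{p'}$ falls below the Ozawa threshold. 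One small bookkeeping slip: Ozawa on $v$ gives $\int Q(u)^{r}\,\dd x\le C_\gamma\|u\|_p^p/\|u\|_{s,p}^p\le C_\gamma$, not $C\|u\|_p^p$, so the final subtracted power is $\|u\|_{s,p}^{q}$ rather than $\|u\|_{s,p}^{q+p/r}$; since $q>p$ this changes nothing.

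Your part (ii) is a genuinely different and, to my eye, cleaner argument. The paper dilates in the spatial variable, $\phi_n^t(\sigma)(x)=\pi_n(\sigma)(x/t)$, which isolates the coefficient $\frac1p\|\pi_n(\sigma)\|_p^p-\frac{\mu}{q}\|\pi_n(\sigma)\|_q^q$ in front of $t^N$ and then asserts this is negative ``if $\mu$ is large enough''; but $\mu$ is the fixed constant from $(f3)$, so that step implicitly leans on extra information from the Berestycki--Lions construction of $\pi_n$. Your scaling of the function values, $\gamma_n(\sigma)=t\,\pi_n(\sigma)$, avoids this entirely: because $q>p$ in $(f3)$ and $F(t)\ge\frac{\mu}{q}|t|^q$, the term $\frac{\mu t^q}{q}\|\pi_n(\sigma)\|_q^q$ eventually dominates $\frac{t^p}{p}\|\pi_n(\sigma)\|_{s,p}^p$ uniformly in $\sigma$ --- using compactness of $\mathbb{S}^{n-1}$, $0\notin\pi_n(\mathbb{S}^{n-1})$, and the continuous embedding into $L^q$ --- with no size condition on $\mu$ and no case distinction between $sp<N$ and $sp=N$. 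You also correctly observe that $(f2)$ together with $(f3)$ forces $q\le p_s^*$ when $sp<N$, so the needed embedding is indeed available.
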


\begin{proof} Since $f$  is odd, it follows that $f(0)=0$ and $I(0)=0$.

Let us consider initially the case $sp=N$. Fixed any $\theta>p$, it
follows of Proposition \ref{DesgExp} that there exists
$\varepsilon>0$ small and $C>0$ such that
$$ |F(t)|\leq \frac{\varepsilon |t|^p}{p}+
CQ(t),\ \ \forall t\in \mathbb{R}.$$
Therefore,
\begin{align*}I(u)&\geq\frac{1}{p}\|u\|_{s,p}^{p}-\int_{\mathbb{R}^N}\left(\frac{\varepsilon}{p}|u|^p+C Q(|u|)|u|^{\theta}\right) \dd x =\frac{1}{p}\|u\|_{1}^{p}-C\int_{\mathbb{R}^N}{Q(|u|) |u|^{\theta} \dd x},
\end{align*}where
$$\|u\|_{1}^p=\frac{1}{p}[u]_{s,p}^{p}+\frac{1-\varepsilon}{p} \|u\|_{p}^p<C_1\|u\|_{s,p}$$ is a norm equivalent to
$\|\cdot\|_{s,p}.$

Hence, for any fixed $r>p$, we can take $\alpha_1\in(0,\alpha_0)$
such that $\alpha_1 r \in (0, \alpha_0)$. By applying H\"older's
inequality with $r'=r/(r-1)$, we obtain
$$I(u) \geq \frac{C_1}{p}\|u\|_{s,p}^{p}-C\left(\displaystyle\int_{\mathbb{R}^N} \left(Q(|u|)\right)^{r'} \dd x\right)^{\frac{1}{r'}}
  \left( \displaystyle\int_{\mathbb{R}^N}^{}{|u|^{\theta r}
  \dd x}\right)^{\frac{1}{r}}.$$

Suppose now that $\|u\|_{s,p} \leq 1$. It follows from the
Trudinger-Moser inequality (Theorem \ref{MTNp}, see also Remark \ref{RemarkTM}) that
$$I(u)  \geq  \frac{C_1}{p}\|u\|_{s,p}^{p}-CC_{\alpha r} \|u\|_{p}^{\theta} \geq  \|u\|_{s,p}^{p} \left(C'- \|u\|_{s,p}^{\theta-p}\right).
$$
Since $\theta-p>0$, we obtain ($i$) by taking $\|u\|_{s,p}$ small
enough.

Since $\pi_n$ is continuous, there exists $M>0$ such that
$\|\pi_n(\sigma)\|_{s,p}\leq M,$ for any $\sigma \in
\mathbb{S}^{n-1}$. Let us define, for $t\geq 1$,
$$\phi_{n}^{t}(\sigma)(x) = \pi_n(\sigma)\left(\frac{x}{t}\right):\mathbb{S}^{n-1}\to W_{r}^{s,p}(\mathbb{R}^N).$$
It follows from ($f3$) that
\begin{align*}I(\phi_{n}^{t})&=\frac{t^{N-sp}}{p}[\pi_n(\sigma)]_{s,p}^p+\displaystyle\frac{t^N}{p}\|\pi_n(\sigma)\|_{p}^p-t^N\displaystyle\int_{\mathbb{R}^N}{F(\pi_n(\sigma))\dd x}\\
&\leq\frac{1}{p}[\pi_n(\sigma)]_{s,p}^p+t^N\left(\frac{1}{p}\|\pi_n(\sigma)\|_{p}^p-\frac{\mu}{q} \|\pi_n(\sigma)\|_{q}^q\right)\\
&=\frac{1}{p}B_1^p+t^N\left(\frac{B_{2}^p}{p}-\frac{\mu
B_{3}^q}{q}\right).
\end{align*}
where $B_1=[\pi_n(\sigma)]_{s,p}$ $
B_2=\|\pi_n(\sigma)\|_{p}$ and $B_3=\|\pi_n(\sigma)\|_q$ are
constants. The term between parenthesis is negative if $\mu$ is
large enough. So, there is $\bar{t}>1$ such that
$I(\phi_{n}^{\bar{t}})< 0$, concluding the proof of ($ii$).

We now consider the case $sp<N$. It follows from our hypothesis
$(f1)$ and  $(f2)$ that there exists $\varepsilon>0$ small and
$C_{\varepsilon}>0$ such that
$$ |F(t)|\leq \frac{\varepsilon |t|^p}{p}+ C_{\varepsilon}|t|^{p_{s}^*},\ \ \forall t\in \mathbb{R}.$$ Thus, as
in the proof of Lemma \ref{lemgeo}, we obtain
\begin{align*}
%\[
I(u)&\geq\frac{1}{p}\|u\|_{s,p}^{p}-\int_{\mathbb{R}^N}{\displaystyle\frac{\varepsilon |u|^p}{p}+C_{\varepsilon}|u|^{p_{s}^*} \dd x} \\
&=\frac{1}{p}[u]_{s,p}^{p}+\displaystyle\frac{1-\varepsilon}{p}\|u\|_{p}^p-C_{\varepsilon}\int_{\mathbb{R}^N}{|u|^{p_{s}^*} \dd x} \\
&
\geq\frac{C}{p}\|u\|_{s,p}^{p}-\tilde{C}\|u\|_{s,p}^{p_{s}^*}.
\end{align*}
The rest of the proof is analogous.

\end{proof}

Adapting some ideas of Zhang and Chen \cite{Zhang} we
obtain the next result.
\begin{lemma}\label{lemLTDA}
 Any $(PS)_c$-sequence $(u_j)$ is bounded in $W_{r}^{s,p}(\mathbb{R}^N)$.
\end{lemma}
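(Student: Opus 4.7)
My plan is a contradiction argument by scaling, in the spirit of Zhang and Chen. Suppose that along a subsequence $\|u_j\|_{s,p}\to+\infty$, and set $v_j := u_j/\|u_j\|_{s,p}$, so that $\|v_j\|_{s,p}=1$. By reflexivity of $W_r^{s,p}(\mathbb{R}^N)$ and the compact embedding $W_r^{s,p}(\mathbb{R}^N)\hookrightarrow L^q(\mathbb{R}^N)$ for $q\in(p,p_s^*)$, I extract a further subsequence with $v_j\rightharpoonup v$ in $W_r^{s,p}$, $v_j\to v$ strongly in $L^q(\mathbb{R}^N)$ for every such $q$, and $v_j(x)\to v(x)$ for a.e.\ $x$. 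Dividing $I(u_j)=c+o(1)$ by $\|u_j\|_{s,p}^p$ gives
$$\frac{1}{\|u_j\|_{s,p}^p}\int_{\mathbb{R}^N} F(u_j)\,\dd x \longrightarrow \frac{1}{p}. \qquad (\ast)$$
The plan is to contradict $(\ast)$ by analyzing the two cases $v\not\equiv 0$ and $v\equiv 0$.

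If $v\not\equiv 0$, then on the positive-measure set $\{v\neq 0\}$ one has $|u_j(x)|=\|u_j\|_{s,p}|v_j(x)|\to+\infty$ a.e. Since $f$ is odd, $F$ is even, and $(f3)$ yields $F(t)\geq(\mu/q)|t|^q$; as $q>p$, this forces $F(u_j)/|u_j|^p\to+\infty$ pointwise on $\{v\neq 0\}$. After a standard splitting into $\{|u_j|\leq M\}$ and $\{|u_j|>M\}$ to control the possibly negative contribution coming from small values of $u_j$ (using $|F(t)|\leq C|t|^p$ near the origin, by $(f1)$), Fatou's lemma applied to $(F(u_j)/|u_j|^p)|v_j|^p$ drives the left-hand side of $(\ast)$ to $+\infty$, a contradiction. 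When $v\equiv 0$, I would use the strong convergence $v_j\to 0$ in $L^q$ for $q\in(p,p_s^*)$. In the subcritical range $sp<N$, interpolation between $(f1)$ at the origin and $(f2)$ at infinity gives, for every $\varepsilon>0$, some $q_0\in(p,p_s^*)$ and $C_\varepsilon>0$ with
$$|F(t)|\leq \varepsilon\bigl(|t|^p+|t|^{p_s^*}\bigr)+C_\varepsilon|t|^{q_0}.$$
Divided by $\|u_j\|_{s,p}^p$, the $|t|^p$ piece contributes $\varepsilon$, the subcritical piece vanishes because $v_j\to 0$ in $L^{q_0}$, and one must control the critical piece, whose Sobolev bound $\varepsilon C\|u_j\|_{s,p}^{p_s^*-p}$ blows up. In the critical exponential range $sp=N$, the analogous difficulty arises from combining Proposition~\ref{DesgExp} with H\"older's inequality and the Adachi-Tanaka estimate of Theorem~\ref{MTNp} applied to the normalized sequence $v_j$.

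The \textbf{main obstacle} is therefore precisely the vanishing case $v\equiv 0$ under critical growth. Since the Ambrosetti-Rabinowitz condition is not assumed, the identity
$$pI(u_j)-I'(u_j)u_j=\int_{\mathbb{R}^N}\bigl[f(u_j)u_j-pF(u_j)\bigr]\,\dd x = pc+o(\|u_j\|_{s,p})$$
does not yield boundedness directly (no sign information on $f(t)t-pF(t)$), and since a Poho\v{z}aev identity for $(-\Delta_p)^s$ is unavailable, one cannot compare $I(u_j)$ to its dilations $u_j(\cdot/t)$. My plan to close this gap is to follow Zhang-Chen and use the radial symmetry of $v_j$ to invoke a P.~L.\ Lions type vanishing/concentration lemma: the strong convergence $v_j\to 0$ in $L^q$ together with radiality rules out concentration of critical mass at infinity, so the critical term in the estimate above must in fact be $o(\|u_j\|_{s,p}^p)$. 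Once this is established, $(\ast)$ is contradicted and the proof is complete.
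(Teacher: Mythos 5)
Your contradiction setup and your treatment of the case $v\not\equiv 0$ match the paper (the paper argues via Fatou exactly as you do, though your splitting into $\{|u_j|\le M\}$ and $\{|u_j|>M\}$ is superfluous: $(f3)$ together with the oddness of $f$ already gives $F(t)\ge \mu|t|^q/q\ge 0$ for every $t$, so the Fatou integrand is nonnegative without any further decomposition). The genuine problem is in the case $v\equiv 0$, which you correctly identify as the crux but do not close.

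Your strategy there relies only on the energy level condition $I(u_j)\to c$, attempting to show that $\|u_j\|_{s,p}^{-p}\int F(u_j)\,\dd x$ cannot tend to $1/p$. This cannot work: under critical growth the term $\int |u_j|^{p_s^*}\dd x$ (respectively the exponential term) is bounded only by a power of $\|u_j\|_{s,p}$ that is strictly larger than $p$, and no Lions vanishing lemma can help, because the quantity you need to control, $\varepsilon\,\|u_j\|_{s,p}^{-p}\int|u_j|^{p_s^*}\dd x$, blows up like $\varepsilon\,\|u_j\|_{s,p}^{p_s^*-p}$ regardless of whether the mass concentrates. Radiality and $v_j\to 0$ in $L^q$ control the $v_j$'s, not the $u_j$'s, and nothing here forces the critical part of $\int F(u_j)$ to be $o(\|u_j\|_{s,p}^p)$. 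The missing ingredient is the \emph{derivative} part of the $(PS)_c$ condition, which you never use in this case. The paper tests $I'(u_j)$ against $v_j$ and divides by $\|u_j\|_{s,p}^{p-1}$; by the $(p-1)$--homogeneity of $(-\Delta_p)^s$ and of $|\cdot|^{p-2}\cdot$ this yields the exact identity
\[
\frac{I'(u_j)\cdot v_j}{\|u_j\|_{s,p}^{p-1}} \;=\; \|v_j\|_{s,p}^p \;-\; \int_{\mathbb{R}^N}\frac{f(u_j)\,v_j}{\|u_j\|_{s,p}^{p-1}}\,\dd x .
\]
The left-hand side tends to $0$ since $\|I'(u_j)\|_*\to 0$ and $\|v_j\|_{s,p}=1$. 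The nonlinear term is handled by first noting, from the boundedness of the other terms in the rescaled equation, that the functionals $T_j(\phi)=\int f(u_j)\phi/\|u_j\|_{s,p}^{p-1}\,\dd x$ are uniformly bounded on $W^{s,p}_r$; via Hahn--Banach/Riesz one obtains a uniformly bounded representative $h_j\in L^{q'}$ with $h_j=f(u_j)/\|u_j\|_{s,p}^{p-1}$ a.e., and then H\"older together with $v_j\to 0$ in $L^q$ gives $\int f(u_j)v_j/\|u_j\|_{s,p}^{p-1}\to 0$. This forces $\|v_j\|_{s,p}^p\to 0$, contradicting $\|v_j\|_{s,p}=1$. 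Your proposal is therefore incomplete at precisely the step you flagged, and the fix requires switching from the energy identity to the derivative identity rather than invoking concentration--compactness.
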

\begin{proof} Let $(u_j)$ be a $(PS)_c$ sequence. Then we have \begin{align*}\frac{1}{p}\|u_j\|_{s,p}^p-\int_{\mathbb{R}^N}{F(u_j) \dd x}&\to c,\\
\langle (-\Delta_p)^s u_j, \phi\rangle +
    \int_{\mathbb{R}^N}{|u_j|^{p-2}u_j\phi \dd x} -
    \int_{\mathbb{R}^N}{f(u_j)\phi \dd x}&=o(1)\|\phi\|,
\end{align*}
for any $\phi\in W_{r}^{s,p}(\mathbb{R}^N)$.

By contradiction, passing to a subsequence if necessary, we can suppose that $0<\|u_j\|_{s,p} \to +\infty$ as $j\to \infty$. We then set
$$v_j=\frac{u_j}{\|u_j\|_{s,p}}, \ j\in\mathbb{N}.$$
Hence, $v_j \rightharpoonup v \in W_{r}^{s,p}(\mathbb{R}^N)$. Therefore, for  all $q\in (p,p_{s}^*)$, we have $v_j \to v$ in $L^q(\mathbb{R}^N)$ and also $v_j(x)\to v(x)$ a.e. in $\mathbb{R}^N.$

Let us suppose initially that $v\neq 0$. Then, $\Theta = \{x \in \mathbb{R}^N\,:\, v(x) \neq 0\}$ has positive Lebesgue measure and $|u_j(x)|= |v_j(x)|\, \|u_j(x)\|_{s,p} \to \infty, \ \forall x \in \Theta$.

It follows from ($f3$) that $F(v_j)\geq 0$ for all $j$ and
$$\frac{F(u_j)}{\|u_j\|_{s,p}^p}\geq\frac{\mu |u_j|^q}{\|u_j\|_{s,p}^p}=\frac{\mu |v_j|^q \|u_j\|_{s,p}^q}{\|u_j\|_{s,p}^p}=\mu |v_j|^q \|u_j\|_{s,p}^{q-p}.$$

Hence, it follows from Fatou's Lemma that
$$\liminf_{j\to \infty}{\int_{v\neq 0}^{}{\frac{F(u_j)}{\|u_j\|_{s,p}^p}\dd x}} \geq \int_{v\neq 0}{\liminf_{j\to \infty}{\mu |v_j|^q \|u_j\|_{s,p}^{q-p}}\dd x}=\infty.$$ But we also have
$$(c+o(1)) =\frac{1}{p}\|u_j\|_{s,p}^p- \int_{\mathbb{R}^N}^{}{F(u_j)\dd x}$$ and
$$\frac{1}{p}-\frac{(c+o(1))}{\|u_j\|_{s,p}^p} = \int_{\mathbb{R}^N}^{}{\frac{F(u_j)}{\|u_j\|_{s,p}^p} \dd x} \geq \int_{v\neq 0}^{}{\frac{F(u_j)}{\|u_j\|_{s,p}^p}\dd x}.$$
Thus, we have reached a contradiction.

Suppose now that $v = 0$. Since $I'(u_j)\cdot \phi =
o(1)\|\phi\|_{s,p}$, dividing this expression by $\|u_j\|^{p-1}_{s,p}$ we obtain
\begin{equation}\label{eqp1}
    \langle (-\Delta_p)^sv_j, \phi \rangle +\int_{\mathbb{R}^N}{\frac{|u_j|^{p-2}u_j\phi}{\|u_j\|_{s,p}^{p-1}} \dd x}-\int_{\mathbb{R}^N}^{}{\frac{f(u_j)\phi}{\|u_j\|_{s,p}^{p-1}} \dd x}= \frac{o(1) \|\phi\|_{s,p}}{\|u_j\|_{s,p}^{p-1}}.
\end{equation}
Passing to the limit in \eqref{eqp1} as $j\to \infty$, we conclude that
$$\int_{\mathbb{R}^N}^{}{\frac{f(u_j)\phi}{\|u_j\|_{s,p}^{p-1}} \dd x}\to 0,
\ \forall \phi \in W_{r}^{s,p}(\mathbb{R}^N).$$ Thus, there exists a
constant $C>0$ such that
\begin{equation*} \left|\int_{\mathbb{R}^N}^{}{\frac{f(u_j)}{\|u_j\|_{s,p}^{p-1}}\phi \dd x}\right| \leq C\|\phi\|_{s,p}, \  \forall \phi \in W_{r}^{s,p}(\mathbb{R}^N).
\end{equation*}

For every $j \in \mathbb{N},$ set
$$ T_j(\phi)=\int_{\mathbb{R}^N}^{}{\frac{f(u_j)\phi}{\|u_j\|_{s,p}^{p-1}} \dd x}, \ \phi \in W_{r}^{s,p}(\mathbb{R}^N).$$

It follows that $\{T_j\}$ is a family of bounded linear functionals
and $\displaystyle\sup_{j\in \mathbb{N}}{|T_j(\phi)|} \leq C$ for
all $\phi \in W_{r}^{s,p}(\mathbb{R}^N)$. Hence,
$$\sup_{j\in \mathbb{N}}{\|T_j\|} < \infty.$$
Taking into account the embedding $W_{r}^{s,p}(\mathbb{R}^N)
\hookrightarrow L^q(\mathbb{R}^N)$ for $q\in [p,+\infty)$
(respectively, $q\in [p,p_{s}^*]$ if $sp<N$), the Hahn-Banach
Theorem guarantees the existence of a continuous linear functional
$\tilde{T}_j$ defined in $L^q(\mathbb{R}^N)$ such that
$\tilde{T}_j(\phi)= T_j(\phi)$ and
$\|\tilde{T}_j\|_{({L^{q}(\mathbb{R}^N)})^*} = \|T_j\|$, for all
$\phi \in W_{r}^{s,p}(\mathbb{R}^N)$.

Thus, there exist functions $h_j \in L^{q'}(\mathbb{R}^N)$ such
that $\|\tilde{T}_j\|_{({L^{q}(\mathbb{R}^N)})^*} = \|h_j\|_{{L^{q'}(\mathbb{R}^N)}}$ and
$$\tilde{T}_j(\phi) = \int_{\mathbb{R}^N}{h_j \phi \dd x}.$$
Hence, for all $\phi \in L^q(\mathbb{R}^N)$ we have
$$\int_{\mathbb{R}^N}^{}{h_j\phi \dd x}-\int_{\mathbb{R}^N}^{}{\frac{f(u_j)\phi}{\|u_j\|_{s,p}^{p-1}} \dd x} =0.$$

Now let us consider the case $sp=N$. It follows from Lemma \ref{limit} that
$$\int_{\mathbb{R}^N}^{}{\left(\frac{|f(u_j)|}{\|u_j\|_{s,p}^{p-1}}\right)^{q'}\dd x}
\leq\frac{C^{q'}}{\|u_j\|_{s,p}^{q'(p-1)}}
\int_{\mathbb{R}^N}^{}\left(Q(|u_j|)\right)^{q'}\dd x.$$
Since $\left(Q(|u_j|)\right)^{q'} \in L^1(\mathbb{R}^N)$, it
follows that
$$\frac{f(u_j)}{\|u_j\|_{s,p}^{p-1}} \in L^{q'}(\mathbb{R}^N).$$

In the case $sp<N$, ($f2$) implies that
$$\int_{\mathbb{R}^N}{\left(\frac{|f(u_j)|}{\|u_j\|_{s,p}^{p-1}}\right)^{q'}\dd x} \leq \frac{C^{q'}}{\|u_j\|_{s,p}^{q'(p-1)}} \int_{\mathbb{R}^N}^{}{\left(|u_j|^{p_{s}^*-1}\right)^{q'} \dd x}.$$
Since $\left(|u_j|^{p_{s}^*-1}\right)^{q'} \in L^1(\mathbb{R}^N)$, we also have
$$\frac{f(u_j)}{\|u_j\|_{s,p}^{p-1}} \in L^{q'}(\mathbb{R}^N).$$

Thus, if $sp\leq N$, we conclude that
$$h_j-\frac{f(u_j)}{\|u_j\|_{s,p}^{p-1}} \in L^{q'}(\mathbb{R}^N) \subset L_{loc}^{1}(\mathbb{R}^N)\quad \text{and}\quad h_j(x)=\frac{f(u_j(x))}{\|u_j\|_{s,p}^{p-1}}\ \  a.e. \textrm{ in } \mathbb{R}^N.$$

Taking $\phi=v_j$ yields
$$\left|  \int_{\mathbb{R}^N}^{}{\frac{f(u_j)}{\|u_j\|_{s,p}^{p-1}}v_j \dd x}\right| \leq \left\|\frac{f(u_j)}{\|u_j\|_{s,p}^{p-1}}\right\|_{q'} \|v_j\|_q = \|h_j\|_{q'}\|v_j\|_q \leq K \|v_j\|_q. $$
But $v_j\to 0$ in $L^q(\mathbb{R}^N)$ implies
$$\int_{\mathbb{R}^N}^{}{\frac{f(u_j)v_j}{\|u_j\|_{s,p}^{p-1}} \dd x} \to 0.$$ Therefore,
$$\|v_j\|_{s,p}^p=\frac{I'(u_j)v_j}{\|u_j\|_{s,p}^{p-1}} + \int_{\mathbb{R}^N}^{}{\frac{f(u_j)}{\|u_j\|_{s,p}^{p-1}}v_j \dd x} \to 0 $$ and we obtain that $v_j \to 0 \textrm{ in }
W_{r}^{s,p}(\mathbb{R}^N)$. This a contradiction, since
$\|v_j\|_{s,p}=1$. We conclude that the $(PS)_c$-sequence must be bounded.
\end{proof}

\begin{lemma}
Passing to a subsequence if necessary, the $(PS)_c$-sequence $(u_j)$
converges strongly in $W_{r}^{s,p}(\mathbb{R}^N)$.
\end{lemma}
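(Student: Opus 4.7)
By the preceding Lemma \ref{lemLTDA}, $(u_j)$ is bounded in $W_r^{s,p}(\mathbb{R}^N)$, so up to a subsequence $u_j \rightharpoonup u$ weakly in $W_r^{s,p}(\mathbb{R}^N)$, $u_j \to u$ strongly in $L^q(\mathbb{R}^N)$ for every $q \in (p, p_s^*)$ by the compact radial embedding, and $u_j(x) \to u(x)$ a.e. The strategy is to first show that $u$ is a weak solution of \eqref{Prob1}, then test against $u_j - u$, and finally close the argument by the elementary monotonicity inequality for the $p$-Laplacian.

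First I would show $I'(u) = 0$. Given $\phi \in W_r^{s,p}(\mathbb{R}^N)$, the Gagliardo kernels $|u_j(x)-u_j(y)|^{p-2}(u_j(x)-u_j(y))|x-y|^{-(N+sp)/p'}$ converge a.e.\ and are bounded in $L^{p'}$ of the product measure, so a Vitali-type argument yields $\langle(-\Delta_p)^s u_j, \phi\rangle \to \langle(-\Delta_p)^s u, \phi\rangle$; the lower-order term $\int |u_j|^{p-2}u_j\,\phi\,\dd x$ converges similarly. For $\int f(u_j)\phi\,\dd x$, I would repeat the Hahn--Banach / Riesz argument already used in the proof of Lemma \ref{lemLTDA}: combined with Lemma \ref{limit} when $sp=N$, and with $(f2)$ when $sp<N$, this delivers a uniform bound $\|f(u_j)\|_{q'} \leq C$ for some $q \in (p, p_s^*)$, after which the strong $L^q$-convergence of $u_j$ and Vitali give the desired limit.

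Next, test $I'(u_j)\cdot(u_j - u) = o(1)$ and $I'(u)\cdot(u_j - u) = o(1)$ (the latter by weak convergence), and subtract:
\begin{align*}
\bigl\langle(-\Delta_p)^s u_j-(-\Delta_p)^s u,\, u_j-u\bigr\rangle &+ \int_{\mathbb{R}^N}\bigl(|u_j|^{p-2}u_j - |u|^{p-2}u\bigr)(u_j-u)\,\dd x \\
&= \int_{\mathbb{R}^N}\bigl(f(u_j)-f(u)\bigr)(u_j-u)\,\dd x + o(1).
\end{align*}
The integral $\int f(u)(u_j-u)\,\dd x \to 0$ since $f(u) \in L^{q'}$ and $u_j \to u$ in $L^q$; and $\int f(u_j)(u_j-u)\,\dd x \to 0$ by Hölder using the uniform $L^{q'}$-bound on $f(u_j)$ from the previous step. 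To conclude, apply the standard inequality for $p \geq 2$,
$$\bigl(|a|^{p-2}a - |b|^{p-2}b\bigr)(a-b) \geq c_p\,|a-b|^p,$$
both pointwise inside the Gagliardo kernel and in the $L^p$ term; this yields $c_p\bigl([u_j-u]_{s,p}^p + \|u_j-u\|_p^p\bigr) \leq o(1)$, so $u_j \to u$ strongly in $W_r^{s,p}(\mathbb{R}^N)$.

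\textbf{Main obstacle.} The genuinely delicate step is bounding $f(u_j)$ in a subcritical $L^{q'}$ with $q < p_s^*$ (so that one can exploit the \emph{strong} $L^q$-convergence rather than only $L^{p_s^*}$-boundedness). In the exponential critical case $sp=N$ this forces one to keep $\|u_j\|_{s,p}$ below the Trudinger--Moser threshold of Theorem \ref{MTNp} so that Lemma \ref{limit} can be applied; in the Sobolev critical case $sp<N$ one must leverage $(f2)$ together with the Hahn--Banach / Riesz trick from Lemma \ref{lemLTDA} to avoid the unavailable compactness at the exponent $p_s^*$.
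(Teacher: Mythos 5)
Your proposal follows the same overall framework as the paper — extract a weakly convergent subsequence, control the nonlinear terms via the growth hypotheses plus the compact radial embedding, and then conclude strong convergence by testing against $u_j-u$ — but your final step is in fact cleaner and more robust than the one written in the paper. The paper bounds $\int|u_j|^{p-1}|u_j-u|\,\dd x \leq \tilde C\|u_j-u\|_p$ and then treats this as an $o(1)$ term; however, the embedding $W_r^{s,p}\hookrightarrow L^p$ is only continuous, not compact, so $\|u_j-u\|_p$ need not vanish. Moreover, the paper never explicitly invokes the monotonicity of the $p$-Laplacian kernel, yet for $p>2$ one cannot deduce $[u_j-u]_{s,p}^p\to 0$ merely from $\langle (-\Delta_p)^s u_j,\,u_j-u\rangle\to 0$. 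Your approach handles both points correctly: by subtracting $I'(u_j)\cdot(u_j-u)=o(1)$ and $I'(u)\cdot(u_j-u)=o(1)$ and then applying Simon's inequality $\bigl(|a|^{p-2}a-|b|^{p-2}b\bigr)(a-b)\geq c_p|a-b|^p$ to both the Gagliardo integrand and the $L^p$ term, the problematic $\int|u_j|^{p-2}u_j(u_j-u)$ contribution is absorbed into the left-hand side instead of being estimated as $o(1)$, so only the nonlinear term $\int\bigl(f(u_j)-f(u)\bigr)(u_j-u)\,\dd x$ must be shown to vanish, which your Hölder plus compactness argument delivers. Two small remarks: first, the preliminary step ``show $I'(u)=0$'' is superfluous — all you need is $I'(u)\cdot(u_j-u)\to 0$, which already follows from $u_j-u\rightharpoonup 0$ since $I'(u)$ is a fixed element of the dual, so you can skip directly to the subtraction; second, when claiming a uniform $L^{q'}$ bound on $f(u_j)$ with subcritical $q$, be careful to combine the near-zero growth $(f1)$ with the growth at infinity (either $(f2)$ or the exponential bound from Lemma \ref{limit}), since a one-sided estimate $|f(t)|\lesssim |t|^{p_s^*-1}$ alone does not place $f(u_j)$ in $L^{q'}(\mathbb{R}^N)$ on an unbounded domain — but this is a shared minor omission, present in the paper's write-up as well.
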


\begin{proof} The Lemma \ref{lemLTDA} guarantees that $(u_j)$ is bounded. So, we can suppose that  $u_j\rightharpoonup u \in W_{r}^{s,p}(\mathbb{R}^N)$. It follows
that $u_j \to u$ in $L^q(\mathbb{R}^N)$,  for any $q \in (p,
p_{s}^*)$ and $u_j(x)\to u(x),\ \ a.e. \textrm{ in } \mathbb{R}^N.$
Therefore, $(u_j-u)$is a bounded sequence in
$W_{r}^{s,p}(\mathbb{R}^N)$. Since $I'(u_j) \to 0$ strongly on dual
of the $W_{r}^{s,p}(\mathbb{R}^N)$, we have
\begin{equation*}
I'(u_j)\cdot (u_j-u) \to 0
\end{equation*}
Hence,
\begin{align*}\langle (-\Delta_p)^s u_j, v_j \rangle %&
%=I'(u_j)(u_j-u)-\int_{\mathbb{R}^N}{|u_j|^{p-2}u_j(u_j-u) \dd x} + \int_{\mathbb{R}^N}^{}{f(u_j)(u_j-u) \dd x}\\
& = \int_{\mathbb{R}^N}^{}{f(u_j)(u_j-u) \dd
x}-\int_{\mathbb{R}^N}{|u_j|^{p-2}u_j(u_j-u) \dd x} +
o(1)\end{align*} and we conclude that
$$\langle (-\Delta_p)^s u_j, u_j-u \rangle  \leq \int_{\mathbb{R}^N}^{}{|f(u_j)| |u_j-u| \dd x}+\int_{\mathbb{R}^N}{|u_j|^{p-1}|u_j-u| \dd x} + o(1).$$

Since $\|u_j\|_{p}^p < \infty$, by applying H\"older's inequality we obtain
\begin{align*}\int_{\mathbb{R}^N}{|u_j|^{p-1}|u_j-u|\dd x} \leq \left(\int_{\mathbb{R}^N}{|u_j|^p \dd x}\right)^{\frac{1}{p'}}\left(\int_{\mathbb{R}^N}{|u_j-u|^p \dd x}\right)^{\frac{1}{p}} \leq \tilde{C}\|u_j-u\|_p.
\end{align*}

Let us now suppose that $sp=N$. It follows from Lemma \ref{limit} that
\begin{equation*}
\int_{\mathbb{R}^N}^{}{|f(u_j)| |u_j-u| \dd x} \leq
C\int_{\mathbb{R}^N}^{}Q(|u_j|)|u_j-u| \dd x.
\end{equation*}
If $\alpha \in(0,\alpha_0)$, we can take $r>1$ such that $\alpha
r \in (0, \alpha_0)$. A new application of H\"older's inequality yields
\begin{equation*}
\int_{\mathbb{R}^N}^{}{|f(u_j)| |u_j-u| \dd x}\leq
C\left(\int_{\mathbb{R}^N}^{}(Q(|u_j|))^{r'} \dd
x\right)^{\frac{1}{r'}}\left(\int_{\mathbb{R}^N}^{}{|u_j-u|^r \dd
x}\right)^{\frac{1}{r}}.
\end{equation*}
Therefore, we conclude from the Trudinger-Moser inequality that
$$\int_{\mathbb{R}^N}^{}{|f(u_j)| |u_j-u|
\dd x} \leq C \|u_j-u\|_r$$ and it follows that $\langle A(u_j), u_j-u \rangle
\leq \tilde{C}\|u_j-u\|_p+C\|u_j-u\|_r +o(1) \to 0, \textrm{ as } j
\to \infty$.

If, however, $sp<N$, it follows from $(f2)$ that
\begin{equation*}
\int_{\mathbb{R}^N}^{}{|f(u_j)| |u_j-u| \dd x} \leq
C\int_{\mathbb{R}^N}^{}{|u_j|^{p_{s}^*-1}|u_j-u| \dd x}.
\end{equation*}
Now, taking $r\in [p,p_{s}^*]$ and applying H\"older's inequality we have
\begin{equation*} \int_{\mathbb{R}^N}^{}{|f(u_j)| |u_j-u| \dd x} \leq C\left(\int_{\mathbb{R}^N}^{}{|u_j|^{(p_{s}^*-1)r'}\dd x}\right)^{\frac{1}{r'}}\left(\int_{\mathbb{R}^N}^{}{|u_j-u|^r \dd x}\right)^{\frac{1}{r}}.
\end{equation*}
Since $\left(|u_j|^{(p_{s}^*-1)}\right)^{r'} \in L^1(\mathbb{R}^N),$
it follows that
$$\int_{\mathbb{R}^N}^{}{|f(u_j)| |u_j-u| \dd x} \leq \tilde{C}_1\|u_j-u\|_r.$$
Thus, $\langle A(u_j), u_j-u \rangle \leq
\tilde{C}\|u_j-u\|_p+\tilde{C}_1\|u_j-u\|_r +o(1) \to 0, \textrm{ as
} j \to \infty.$

So, if $sp\leq N$ we have
\begin{align*}\|u_j-u\|_{s,p}^p&=\langle A(u_j), u_j-u \rangle+\int_{\mathbb{R}^N}^{}{|u_j|^{p-2}u_j
(u_j-u) \dd x} - I'(u)\cdot(u_j-u)\\
&\quad -\int_{\mathbb{R}^p}{f(u)(u_j-u) \dd x} +
\int_{\mathbb{R}^N}^{}{f(u_j)(u_j-u) \dd x} \to 0
\end{align*}
and we are done.
\end{proof}

For every $n\in \mathbb{N}$ we set
\begin{equation*}
b_n=\inf_{\gamma\in\Gamma_n}{\max_{\sigma\in
\mathbb{D}_n}{I(\gamma(\sigma))}},
\end{equation*}
where $\Gamma_n=\left\{\gamma \in
\mathcal{C}(\mathbb{D}_n,W_{r}^{s,p}(\mathbb{R}^N); \gamma \
\textrm{ is odd and } \ \gamma=\gamma_n  \ \textrm{ in } \
\partial\mathbb{D}_n \right\}$ and $\mathbb{D}_n $ denotes the unitary disc in $\mathbb{R}^n$, with $\partial\mathbb{D}_n=\mathbb{S}^{n-1}$.
We define
$$\tilde{\gamma}_n(\sigma)=\left\{\begin{array}{cc}
    |\sigma|\gamma_n(\frac{\sigma}{|\sigma|}), & \sigma\in \mathbb{D}_n\setminus \{0\}, \\
    0, & \sigma=0.
  \end{array}
  \right.
$$
Since $\tilde{\gamma}_n\in \Gamma_n$, we have $\Gamma_n \neq
\emptyset$ for all $n\in\mathbb{N}$.

Considering the $(PS)$-sequence $(u_j)$, since $I(u_j)\to  b_n$ and
$$\{u\in W_{r}^{s,p}(\mathbb{R}^N); \|u\|_{s,p}=\rho\}\cap
\gamma(\mathbb{D}_n) \neq \emptyset, \ \forall \gamma \in
\Gamma_n,$$ it follows that $\beta \leq b_n.$ Hence, $0<\beta \leq
b_n, $ for any $n\in\mathbb{N}$.

We now prove that the functional $I(u)$ possesses an unbounded
sequence of critical values.

\begin{lemma}It holds
\begin{enumerate}
\item[$(i)$] $b_n$ is a critical value of $I(u)$ for any $n\in \mathbb{N}$;

\item[$(ii)$] $ b_n \to \infty$ as $n\to \infty.$
\end{enumerate}
\end{lemma}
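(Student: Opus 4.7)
The plan is to derive both parts from the equivariant quantitative deformation lemma together with Krasnoselskii genus theory. The deformation lemma applies because $I \in \mathcal{C}^1(W_r^{s,p}(\mathbb{R}^N),\mathbb{R})$ is even with $I(0)=0$ and, by the two preceding lemmas, satisfies the Palais-Smale condition at every positive level.

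For part $(i)$, I would argue by contradiction. If $b_n$ were a regular value, the symmetric deformation lemma (applied with $\bar\varepsilon<\beta/2\leq b_n/2$) would furnish $\varepsilon>0$ and an odd continuous map $\eta:W_r^{s,p}(\mathbb{R}^N)\to W_r^{s,p}(\mathbb{R}^N)$ with $\eta=\mathrm{id}$ on $\{u:I(u)\leq\beta/2\}$ and $\eta(\{I\leq b_n+\varepsilon\})\subset\{I\leq b_n-\varepsilon\}$. Pick $\gamma\in\Gamma_n$ with $\max_{\sigma\in\mathbb{D}_n}I(\gamma(\sigma))<b_n+\varepsilon$ and set $\tilde\gamma:=\eta\circ\gamma$. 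Since Lemma \ref{lemgeo}(ii) gives $I(\gamma_n(\sigma))<0<\beta/2$, we have $\eta\circ\gamma_n=\gamma_n$; hence $\tilde\gamma$ is odd, agrees with $\gamma_n$ on $\partial\mathbb{D}_n$, and therefore belongs to $\Gamma_n$. But $\max_\sigma I(\tilde\gamma(\sigma))\leq b_n-\varepsilon$, contradicting the definition of $b_n$.

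For part $(ii)$, I would suppose for contradiction that $\sup_n b_n\leq M<\infty$ and consider
$$K:=\{u\in W_r^{s,p}(\mathbb{R}^N):I'(u)=0,\ \beta\leq I(u)\leq M\}.$$
This set is closed, symmetric, does not contain $0$ (since $I(0)=0<\beta$), and compact by the Palais-Smale condition, so its Krasnoselskii genus $k:=\mathrm{gen}(K)$ is finite. Fix a symmetric open neighborhood $\mathcal{U}\supset K$ with $\mathrm{gen}(\overline{\mathcal{U}})=k$ and $\overline{\mathcal{U}}\subset\{I>\beta/2\}$. Using the Palais-Smale condition to bound the gradient below on $\{I\geq\beta/2\}\setminus\mathcal{U}$, one constructs $\varepsilon>0$ and an odd continuous $\eta$ with $\eta=\mathrm{id}$ on $\{I\leq\beta/2\}$ and $\eta(\{I\leq M+\varepsilon\}\setminus\mathcal{U})\subset\{I\leq\beta/2\}$. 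Now take $n>k$ and $\gamma\in\Gamma_n$ with $\max I\circ\gamma\leq M+\varepsilon$, and set $D:=\gamma^{-1}(\overline{\mathcal{U}})\subset\mathbb{D}_n$. Then $D$ is closed, symmetric, disjoint from $\partial\mathbb{D}_n$, and the monotonicity of the genus under the odd map $\gamma|_D$ yields $\mathrm{gen}(D)\leq\mathrm{gen}(\overline{\mathcal{U}})=k<n=\mathrm{gen}(\mathbb{S}^{n-1})$. A Borsuk-Ulam type excision then produces an odd continuous $h:\mathbb{D}_n\to\mathbb{D}_n\setminus D$ equal to the identity on $\partial\mathbb{D}_n$, and $\tilde\gamma:=\eta\circ\gamma\circ h\in\Gamma_n$ satisfies $\max_\sigma I(\tilde\gamma(\sigma))\leq\beta/2<b_n$, the desired contradiction.

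The main obstacle lies in part $(ii)$: namely, the construction of the odd excision $h$, together with the passage across the finitely many critical levels in $[\beta,M]$ needed to build the global deformation $\eta$. Both can be handled by recasting $b_n$ as a genus-based Ljusternik-Schnirelmann minimax on a suitable family of closed symmetric sets, where the classical Rabinowitz-Struwe argument yields the unboundedness directly.
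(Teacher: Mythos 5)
Your argument for part $(i)$ is a direct rerun of the proof of the symmetric mountain pass theorem and is essentially what the paper does; it simply quotes that theorem once the geometry (Lemma~\ref{lemgeo}) and the $(PS)$-condition (the two preceding lemmas) are in hand.

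For part $(ii)$, the gap you flag at the end is real and is the crux of the matter. From $\mathrm{gen}(D)\leq k<n$ alone you cannot produce an odd continuous $h\colon\mathbb{D}_n\to\mathbb{D}_n\setminus D$ equal to the identity on $\partial\mathbb{D}_n$; genus bounds give you nonemptiness of complements, not equivariant retractions relative to the boundary. (Take $D$ a small symmetric sphere $\epsilon\,\mathbb{S}^{n-2}\times\{0\}$ sitting in the interior: its genus is $n-1<n$, yet no odd boundary-fixing map into $\mathbb{D}_n\setminus D$ is available in general.) There is also the secondary issue that a single deformation pushing $\{I\leq M+\varepsilon\}\setminus\mathcal{U}$ all the way below $\beta/2$ must be justified across several critical levels at once, which requires more than one application of the quantitative deformation lemma. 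The remedy you propose in your last sentence is exactly what the paper does: it does not excise inside $\Gamma_n$, but instead passes to the Rabinowitz family
\[
\tilde{\Gamma}_n=\bigl\{\,h(\overline{\mathbb{D}_m\setminus Z})\ :\ h\in\Gamma_m,\ m\geq n,\ Z\in\mathcal{E}_m,\ \mathrm{gen}(Z)\leq m-n\,\bigr\},
\]
which is stable under odd deformations and under precisely this kind of genus-controlled excision. Setting $d_n=\inf_{A\in\tilde{\Gamma}_n}\max_{u\in A}I(u)$ one gets $d_n\leq b_n$ and $d_n\leq d_{n+1}$, and the classical Rabinowitz argument (using $(PS)$) gives $d_n\to\infty$, hence $b_n\to\infty$. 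So your final paragraph identifies the correct resolution, but the direct excision step preceding it would not go through as written.
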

\begin{proof} We have that $b_n$ is critical value for any $n\in\mathbb{N}$ as a
consequence of the symmetric mountain pass theorem.

As in \cite[Chapter 9]{PHR}, we define
$$\tilde{\Gamma_n}=\left\{h(\overline{\mathbb{D}_m\setminus Z}) ; h\in \Gamma_n, m\geq n, Z\in \mathcal{E}_m \textrm{
and } gen(Z)\leq m-n\right\},$$ where $\mathcal{E}_m$ is a family of
closed subsets $A\subset \mathbb{R}^m\setminus \{0\}$ such that
$A=-A$ and $gen(A)$ is the Krasnoselski genus of $A$.

We define a sequence $(d_n)$ of min-max values for the $I(u)$ as
$$d_n=\inf_{A\in\tilde{\Gamma_n}}{\max_{u\in A}{I(u)}}.$$
It follows that
$$d_n\leq b_n \ \textrm{ and }  \ d_n\leq d_{n+1}, \forall n \in \mathbb{N}.$$

Since $I(u)$ satisfies the $(PS)$-condition, it follows that $$d_n
\to \infty \textrm{ as } n\to \infty.$$ Since  $d_n\leq b_n, \forall
n \in \mathbb{N},$ we have that
$$b_n \to \infty \textrm{
as } n\to \infty.$$
The proof is complete if $0<s<1$.

In the limit case $s = 1$ the adequate Sobolev space is $W^{1,p}(\mathbb{R}^p)$ and results equivalent to those presented in the Introduction are still valid (see, e.g., in \cite{BS, Bre}). Hence, the same arguments exposed above are easily adaptable in order to obtain the result.
\end{proof}

\section{Ground state}
In this section we follow \cite{AFS}. Observe that the case $sp<N$ requires the Poho\v{z}aev identity for the fractional $p$-Laplacian operator. 

Let us consider the set of non trivial solutions of
the problem \eqref{Prob1}, that is, the set
\begin{align*}\Sigma =\left\{u \in W^{s,p}(\mathbb{R}^N)\backslash \{ 0 \} \ : \
    I'(u)=0\right\}.
\end{align*}
We denote by $m$ its least energy level (or ground state level), that is,
\begin{align*}m=\inf_{u \in \Sigma}{I(u)}.
\end{align*}
In the case $sp<N$, we also introduce the set
\begin{equation}\label{M}
    \mathcal{M}=\left\{u \in W^{s,p}(\mathbb{R}^N)\backslash \{ 0 \} \, :
    \, \int_{\mathbb{R}^N}{G(u) \dd x} = 1\right\},
\end{equation}
where $G(t)=F(t)-\displaystyle\frac{|t|^p}{p}$ is the primitive of
$g(t)$. 

In the case $sp=N$, we define $\mathcal{M}=\left\{u \in W^{s,p}(\mathbb{R}^N)\backslash \{ 0 \}\,:\,\int_{\mathbb{R}^N}{G(u) \dd x}=0\right\}$.

Furthermore, we will need to consider the space
$$\mathcal{D}^{s,p}(\mathbb{R}^N):=\{  u\in L^{p_{s}^*}(\mathbb{R}^N)\,:\, [u]_{s,p}^{p}< \infty \}.$$
It is well known that the following inequality holds
$$S\left[\int_{\mathbb{R}^N}{|u|^{p_{s}^*}}\right]^{\frac{p}{p_{s}^*}} \leq [u]_{s,p}^p, \quad \forall u \in  \mathcal{D}^{s,p}(\mathbb{R}^N),$$
where $S>0$ is the best constant of Sobolev embedding, see
\cite{VM}.

\textbf{The case $sp<N$.}

The simple proof of the next result can be found in \cite[Lemma 2.2]{AFS}.
\begin{lemma} The set  $\mathcal{M}$ defined in \eqref{M} is not empty and a $\mathcal{C}^1$ manifold.
\end{lemma}

We denote
\begin{equation*}
    T(u)= \displaystyle\frac{1}{p}[u]_{s,p}^p \qquad D = \inf_{u \in
        \mathcal{M}}{T(u)}.
\end{equation*}
Observe that
\begin{align*}
    pD = \inf_{u \in \mathcal{M}}{[u]_{s,p}^p}.
\end{align*}

It is worth to point out that if we define the $C^1$ functional
\[J(u)=\int_{\mathbb{R}^N}G(u)\dd x- 1,\]
it follows from ($f5$) that $u\in\mathcal{M}$ implies
\begin{equation*}
    J'(u)\cdot u=\int_{\mathbb{R}^N}(f(u)u-u^p)\dd x=\int_{\mathbb{R}^N}(f(u)u-pF(u))\dd x+p\int_{\mathbb{R}^N}G(u)\dd x\geq p.
\end{equation*}

We also define the mini-max level associated to the functional $I$
\begin{equation}\label{b}
    b=\inf_{\gamma \in \Gamma}{\max_{t\in[0,1]}{I(\gamma(t))}},
\end{equation}
where
\begin{align*}\Gamma=\{\gamma\in\mathcal{C}([0,1],W^{s,p}(\mathbb{R}^N)):
    \gamma(0)=0, I(\gamma(1))<0\}.
\end{align*}
Since $I$ satisfies the Mountain Pass Geometry in
$W^{s,p}_r(\mathbb{R}^N)$, (see Lemma \ref{lemgeo}), it follows that
$\Gamma \neq \emptyset$.

Also in the case $sp<N$, we make use of a result which is not fully
proved (see \cite{LMS2, Brasco}), namely, that a weak solution $u\in
\mathcal{D}^{s,p}(\mathbb{R}^N)\cap L^\infty(\mathbb{R}^N)$ of
problem \eqref{Prob1} satisfies the Poho\v{z}aev identity
\[\frac{N-sp}{p}[u]^p_{s,p}=N\int_{\mathbb{R}^N}G(u)\dd x.\]
Therefore, we define the set
\begin{align*}\mathcal{P}=\left\{u \in W^{s,p}(\mathbb{R}^N)\setminus \{ 0 \} \,:\, \displaystyle\frac{N-sp}{p}[u]_{s,p}^p=N \int_{\mathbb{R}^N}^{}{G(u)\dd x}\right\}.
\end{align*}
We denote by
\begin{align*}\tilde{p}=\inf_{u \in \mathcal{P}}{I(u)}
\end{align*}
and adapting the arguments in \cite[Lemma 2.4]{LMS} we have
$$ \tilde{p}= \displaystyle\frac{s}{N}\left(\frac{N-sp}{pN}\right)^{(N-sp)/sp}(pD)^{N/sp},$$
(see also \cite[Theorem 6.3]{Brasco}).

With minor changes, the proof of the next result is given in
\cite[Lemma 2.1 ]{AFS}.
\begin{lemma} It holds $\tilde{p} \leq b$.
\end{lemma}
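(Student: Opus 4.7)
The plan is to show that every admissible path $\gamma \in \Gamma$ necessarily intersects the Poho\v{z}aev set $\mathcal{P}$; once that is done, $\max_{t \in [0,1]} I(\gamma(t)) \geq \tilde{p}$ for every $\gamma$, and taking the infimum over $\Gamma$ gives $b \geq \tilde{p}$. Introduce the continuous functional
$$\Phi(u) = \frac{N-sp}{p}[u]_{s,p}^p - N\int_{\mathbb{R}^N} G(u)\,\dd x,$$
so that, by definition, $\mathcal{P}=\{u\neq 0 : \Phi(u)=0\}$. I will run an intermediate value argument for $\Phi\circ\gamma$ on $[0,1]$.

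First I would verify that $\Phi$ is strictly positive on a deleted neighbourhood of the origin. Hypotheses $(f1)$ and $(f2)$ provide, for any $\varepsilon>0$, a constant $C_\varepsilon>0$ with $|F(t)|\leq \frac{\varepsilon}{p}|t|^p+C_\varepsilon|t|^{p_s^*}$. Since $G(t)=F(t)-|t|^p/p$, this yields
$$\int_{\mathbb{R}^N} G(u)\,\dd x \;\leq\; -\frac{1-\varepsilon}{p}\|u\|_p^p + C_\varepsilon\|u\|_{p_s^*}^{p_s^*},$$
and therefore, using the continuous embedding $W^{s,p}(\mathbb{R}^N)\hookrightarrow L^{p_s^*}(\mathbb{R}^N)$,
$$\Phi(u) \;\geq\; c_1\|u\|_{s,p}^p - c_2\|u\|_{s,p}^{p_s^*}.$$
Since $p_s^*>p$, there exists $\delta>0$ such that $\Phi(u)>0$ whenever $0<\|u\|_{s,p}\leq \delta$. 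At the other endpoint, rewriting $I(u)=\frac{1}{p}[u]_{s,p}^p - \int G(u)\,\dd x$, the hypothesis $I(\gamma(1))<0$ forces $\int G(\gamma(1))\,\dd x > \frac{1}{p}[\gamma(1)]_{s,p}^p > \frac{N-sp}{Np}[\gamma(1)]_{s,p}^p$, so $\Phi(\gamma(1))<0$.

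To extract a crossing in $\mathcal{P}$ itself, I would let $U$ be the connected component of the open set $\{t\in[0,1]: \gamma(t)\neq 0\}$ that contains $1$, and let $t_1$ be its left endpoint. Because $\gamma$ is continuous and $\gamma(t_1)=0$ (or $t_1=0$ with $\gamma(0)=0$), we have $\|\gamma(t)\|_{s,p}\to 0$ as $t\to t_1^+$, so $\Phi(\gamma(t))>0$ for $t$ slightly larger than $t_1$, while $\Phi(\gamma(1))<0$. Continuity of $\Phi\circ\gamma$ on $U$ together with the intermediate value theorem delivers $t^*\in U$ with $\Phi(\gamma(t^*))=0$ and $\gamma(t^*)\neq 0$; that is, $\gamma(t^*)\in\mathcal{P}$. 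Consequently
$$\max_{t\in[0,1]} I(\gamma(t)) \;\geq\; I(\gamma(t^*)) \;\geq\; \inf_{u\in\mathcal{P}} I(u) \;=\; \tilde{p},$$
and taking $\inf_{\gamma\in\Gamma}$ completes the proof.

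The main potential obstacle is the subtlety that a path in $\Gamma$ may revisit $0$, which would prevent a naive IVT applied to $\Phi\circ\gamma$ on all of $[0,1]$ from producing a \emph{nonzero} crossing. Restricting to the component $U$ of the complement of $\gamma^{-1}(0)$ containing $1$ removes that difficulty. Everything else reduces to the elementary bounds on $F$ already recorded in the paper and the definition of the mountain pass class $\Gamma$.
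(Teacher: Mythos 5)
Your argument is correct and is essentially the same path-crossing argument that the paper outsources to Lemma 2.1 of Alves--Figueiredo--Siciliano: one shows the Poho\v{z}aev functional $\Phi$ is positive near the origin (via the small-data bound on $F$ from $(f1)$--$(f2)$) and negative at $\gamma(1)$ (since $I(\gamma(1))<0$), so every admissible path meets $\mathcal{P}$, whence $\tilde{p}\leq b$. Your extra care in restricting the intermediate value argument to the component of $\{\gamma\neq 0\}$ containing $t=1$ is a nice way to rule out a spurious zero crossing at a point where $\gamma$ vanishes.
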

The two next results are Lemma 2.3 and Lemma 2.4 in \cite{AFS}, respectively.
\begin{lemma} Any minimizing sequence $(u_j)\subset \mathcal{M}$ for $T$ is bounded in $W^{s,p}(\mathbb{R}^N).$
\end{lemma}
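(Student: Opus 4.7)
Let $(u_j)\subset\mathcal{M}$ satisfy $T(u_j)\to D$. By definition of $T$, the sequence $[u_j]_{s,p}$ is bounded, so it is enough to bound $\|u_j\|_p$. The constraint $\int_{\mathbb{R}^N} G(u_j)\,\dd x=1$ rewrites as
\begin{equation*}
\int_{\mathbb{R}^N} F(u_j)\,\dd x \;=\; 1+\frac{1}{p}\|u_j\|_p^p,
\end{equation*}
so the task reduces to showing that $\int F(u_j)\,\dd x$ cannot grow faster than a small fraction of $\|u_j\|_p^p$ plus a bounded quantity.

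First, I would combine hypotheses $(f1)$ and $(f2)$ to produce, for every $\varepsilon>0$, a constant $C_\varepsilon>0$ such that
\begin{equation*}
|F(t)|\;\leq\;\frac{\varepsilon}{p}|t|^p+C_\varepsilon |t|^{p_s^*},\qquad\forall\,t\in\mathbb{R}.
\end{equation*}
This is the standard two-side growth estimate used throughout the excerpt. Integrating gives
\begin{equation*}
\int_{\mathbb{R}^N} F(u_j)\,\dd x\;\leq\;\frac{\varepsilon}{p}\|u_j\|_p^p+C_\varepsilon\|u_j\|_{p_s^*}^{p_s^*}.
\end{equation*}

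Next I would control the critical term using the Sobolev inequality $S\|u\|_{p_s^*}^{p}\leq [u]_{s,p}^{p}$ stated in the excerpt: since $[u_j]_{s,p}$ is bounded by some $M>0$, we obtain
\begin{equation*}
\|u_j\|_{p_s^*}^{p_s^*}\;\leq\; S^{-p_s^*/p}\,[u_j]_{s,p}^{p_s^*}\;\leq\; S^{-p_s^*/p}M^{p_s^*}=:K.
\end{equation*}
Inserting this and the constraint into the growth estimate yields
\begin{equation*}
1+\frac{1}{p}\|u_j\|_p^p\;\leq\;\frac{\varepsilon}{p}\|u_j\|_p^p+C_\varepsilon K,
\end{equation*}
which, upon choosing $\varepsilon\in(0,1)$, rearranges to
\begin{equation*}
\frac{1-\varepsilon}{p}\|u_j\|_p^p\;\leq\;C_\varepsilon K-1.
\end{equation*}
Hence $\|u_j\|_p$ is bounded, and combined with the bound on $[u_j]_{s,p}$ we conclude that $\|u_j\|_{s,p}$ is bounded.

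The only subtle point is to ensure that the constraint indeed forces the $L^p$-mass to be controllable; this is exactly where the Sobolev embedding into $L^{p_s^*}$ (valid because $sp<N$) plays the decisive role, letting the critical growth term be absorbed by the already-bounded seminorm. No compactness or Pohoz\v{a}ev information is needed at this step; the argument is essentially the same bookkeeping as in \cite{AFS}, and is standard for Berestycki--Lions type constrained minimization problems.
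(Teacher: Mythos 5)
Your proof is correct and follows the natural route that the paper itself relies on (it simply cites \cite[Lemma 2.3]{AFS} at this point): since $T(u_j)\to D$, the Gagliardo seminorm $[u_j]_{s,p}$ is bounded; then the constraint $\int_{\mathbb{R}^N} G(u_j)\,\dd x=1$ together with the growth estimate $|F(t)|\leq\frac{\varepsilon}{p}|t|^p+C_\varepsilon|t|^{p_s^*}$ (from $(f1)$--$(f2)$ plus continuity of $f$ on compact sets) and the Sobolev inequality $S\|u\|_{p_s^*}^p\leq[u]_{s,p}^p$ absorbs the $|t|^{p_s^*}$ contribution and lets you choose $\varepsilon\in(0,1)$ to bound $\|u_j\|_p$. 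This is essentially the same bookkeeping as in the cited reference; no gap.
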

\begin{lemma}
    The number $D$ is positive.
\end{lemma}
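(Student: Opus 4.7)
The plan is to argue by contradiction, assuming $D=0$, and show that the constraint $\int_{\mathbb{R}^N}G(u)\,\dd x=1$ cannot be satisfied by a sequence whose Gagliardo seminorm tends to zero. So let $(u_j)\subset\mathcal{M}$ be a minimizing sequence with $[u_j]_{s,p}^p\to 0$.

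First I would extract a pointwise bound on $F$. Combining $(f1)$ and $(f2)$, for each $\varepsilon>0$ there is $C_\varepsilon>0$ such that
\[
|F(t)|\leq \frac{\varepsilon}{p}|t|^p+C_\varepsilon|t|^{p_s^*},\qquad \forall t\in\mathbb{R}.
\]
Rewriting the constraint $\int G(u_j)\,\dd x=1$ in the form $\int F(u_j)\,\dd x=1+\tfrac{1}{p}\|u_j\|_p^p$ and applying the above bound yields
\[
1+\frac{1}{p}\|u_j\|_p^p\leq \frac{\varepsilon}{p}\|u_j\|_p^p+C_\varepsilon\|u_j\|_{p_s^*}^{p_s^*},
\]
i.e.
\[
1\leq \frac{\varepsilon-1}{p}\|u_j\|_p^p+C_\varepsilon\|u_j\|_{p_s^*}^{p_s^*}.
\]
Fixing $\varepsilon<1$ makes the first term non-positive, so that $1\leq C_\varepsilon\|u_j\|_{p_s^*}^{p_s^*}$.

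Next I would use the Sobolev embedding $\mathcal{D}^{s,p}(\mathbb{R}^N)\hookrightarrow L^{p_s^*}(\mathbb{R}^N)$ recalled right before the statement, namely
\[
S\,\|u_j\|_{p_s^*}^{p}\leq [u_j]_{s,p}^p,
\]
which gives $\|u_j\|_{p_s^*}^{p_s^*}\leq S^{-p_s^*/p}[u_j]_{s,p}^{p_s^*}$. Substituting,
\[
1\leq C_\varepsilon\,S^{-p_s^*/p}\,[u_j]_{s,p}^{p_s^*}.
\]
Since $p_s^*>p>0$ and $[u_j]_{s,p}\to 0$, the right-hand side tends to $0$, a contradiction. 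Consequently $D>0$.

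I do not anticipate a serious obstacle: the argument is a direct exponent-comparison between the subcritical term controlled by $\varepsilon$ and the critical term tamed by Sobolev. The one point that requires a little care is choosing $\varepsilon$ strictly less than $1$ so that the $\|u_j\|_p^p$ contribution is absorbed into a non-positive quantity and drops out, leaving only the critical term to be bounded by $[u_j]_{s,p}$.
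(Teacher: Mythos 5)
Your proof is correct and follows essentially the same route as the cited Lemma~2.4 of Alves--Figueiredo--Siciliano (the paper simply defers to that reference): argue by contradiction from a minimizing sequence with $[u_j]_{s,p}^p \to 0$, use the growth estimate $|F(t)| \leq \tfrac{\varepsilon}{p}|t|^p + C_\varepsilon|t|^{p_s^*}$ coming from $(f1)$--$(f2)$, absorb the $\|u_j\|_p^p$ term by taking $\varepsilon<1$, and conclude via the Sobolev inequality $S\|u\|_{p_s^*}^p \leq [u]_{s,p}^p$. The only point worth stating explicitly is that $W^{s,p}(\mathbb{R}^N) \hookrightarrow \mathcal{D}^{s,p}(\mathbb{R}^N)$, so the Sobolev constant $S$ applies to the $u_j$; with that noted, the argument is complete.
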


From Ekeland's Variational Principle there are $(u_n)\subset
\mathcal{M}$ and a sequence of Lagrange multipliers
$(\lambda_n)\subset \mathbb{R}$ such that $T(u_n)\to D$ and
$T'(u_n)-\lambda_n J'(u_n)\to 0$ in the dual space
$(W^{s,p}(\mathbb{R}^N))'$. Furthermore, we have already proved that
any minimizing sequence is bounded in $W^{s,p}(\mathbb{R}^N)$.

\textit{Mutatis mutandis}, the proof of the next result is given in \cite[Lemma 2.5 ]{AFS}.
\begin{lemma}\label{lmult} The sequence $(\lambda_n)$ of Lagrange multipliers is bounded. More precisely, it holds \[0\leq \liminf_{n\to\infty}\lambda_n\leq \limsup_{n\to\infty}\lambda_n\leq D.\]
\end{lemma}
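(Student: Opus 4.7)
The plan is to pair the perturbed Euler--Lagrange equation $T'(u_n) - \lambda_n J'(u_n) \to 0$ (in the dual of $W^{s,p}(\mathbb{R}^N)$) with the natural test function $u_n$ itself. Since the previous lemma gives boundedness of the minimizing sequence $(u_n)$ in $W^{s,p}(\mathbb{R}^N)$, the dual-norm factor $\|u_n\|_{s,p}$ appearing after testing is absorbed into the error, and one obtains the scalar identity
\[
pT(u_n) \;-\; \lambda_n\bigl(J'(u_n)\cdot u_n\bigr) \;=\; o(1),
\]
using $T'(u_n)\cdot u_n = [u_n]_{s,p}^p = pT(u_n)$.

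The crucial ingredient is the lower bound $J'(u_n)\cdot u_n \geq p$ that is already recorded in the text as a consequence of hypothesis $(f5)$ together with $u_n\in\mathcal{M}$. Combined with the previous lemma guaranteeing $D>0$, the identity above yields
\[
\lambda_n\bigl(J'(u_n)\cdot u_n\bigr) \;=\; pT(u_n) + o(1) \;\longrightarrow\; pD \;>\; 0.
\]
Since $J'(u_n)\cdot u_n\geq p>0$ for every $n$, the left-hand side can converge to a strictly positive limit only if $\lambda_n$ is eventually nonnegative; otherwise, along any subsequence with $\lambda_n<0$ the product would be $\leq 0$, contradicting $pD>0$. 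This gives $\liminf_{n\to\infty}\lambda_n\geq 0$.

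For the upper bound I would re-use the same identity: once $\lambda_n\geq 0$ for large $n$, the inequality $J'(u_n)\cdot u_n\geq p$ implies
\[
p\,\lambda_n \;\leq\; \lambda_n\bigl(J'(u_n)\cdot u_n\bigr) \;=\; pT(u_n)+o(1).
\]
Dividing by $p$ and passing to the limit, with $T(u_n)\to D$, produces $\limsup_{n\to\infty}\lambda_n\leq D$, which combined with the previous paragraph completes the chain $0\leq\liminf\lambda_n\leq\limsup\lambda_n\leq D$.

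The argument is essentially sign-bookkeeping once the lower bound from $(f5)$ and the boundedness of $(u_n)$ are in hand, so I expect no substantial obstacle; the only point that requires care is verifying that the convergence $T'(u_n)-\lambda_n J'(u_n)\to 0$ in $(W^{s,p}(\mathbb{R}^N))'$, when paired with $u_n$, truly produces an honest $o(1)$ rather than a term of order $\|u_n\|_{s,p}$. This is precisely ensured by the boundedness proved in the preceding lemma, so the argument goes through verbatim as in \cite[Lemma 2.5]{AFS} with the fractional Dirichlet form in place of the classical one.
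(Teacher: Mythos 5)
Your proof is correct and follows precisely the route the paper intends (delegated to \cite[Lemma 2.5]{AFS}): pair the Ekeland relation $T'(u_n)-\lambda_n J'(u_n)\to 0$ with the bounded sequence $u_n$, use $T'(u_n)\cdot u_n = pT(u_n)$ together with the lower bound $J'(u_n)\cdot u_n\geq p$ recorded in the text from $(f5)$ and the constraint $\int G(u_n)=1$, and read the sign and the size of $\lambda_n$ off the limit $\lambda_n\,(J'(u_n)\cdot u_n)\to pD>0$. No gaps; the only subtleties you flag (boundedness of $(u_n)$ making the pairing an honest $o(1)$, and $D>0$) are indeed supplied by the preceding lemmas.
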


As a consequence of Lemma \ref{lmult}, passing to a subsequence if necessary, we can suppose that $\lambda_n\to \lambda^*\in (0,D]$.

The proof of the next two results is obtained by adapting those given in
Lemma 2.6 and Lemma 2.7 in \cite{AFS}, respectively.
\begin{lemma} Any minimizing sequence $(u_n)$ for $T$ can be assumed
radially symmetric around the origin and non-negative.
\end{lemma}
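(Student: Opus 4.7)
The plan is to modify an arbitrary minimizing sequence $(u_n)\subset\mathcal{M}$ in three successive steps --- first pass to $|u_n|$, then rescale by a suitable dilation to restore the constraint $\int G\,\dd x=1$, and finally apply the Schwarz symmetric-decreasing rearrangement --- checking at each stage that $T(u)=\tfrac{1}{p}[u]_{s,p}^p$ does not increase and that the modified function still belongs to $\mathcal{M}$ (or can be brought back to it).

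For step 1 I would use the pointwise inequality $\bigl||a|-|b|\bigr|\le|a-b|$ to obtain $[|u_n|]_{s,p}\le[u_n]_{s,p}$. Because we are working under the convention $f(t)=0$ for $t\le 0$, one has $F(t)=0$ for $t\le 0$ and $F(t)\ge 0$ for $t\ge 0$, so
\begin{equation*}
\int_{\mathbb{R}^N}G(|u_n|)\dd x-\int_{\mathbb{R}^N}G(u_n)\dd x=\int_{u_n<0}F(|u_n|)\dd x\ge 0,
\end{equation*}
giving $\int G(|u_n|)\dd x\ge 1$. Thus $|u_n|$ has lowered (or equal) value of $T$ but may have overshot the constraint.

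For step 2 I would exploit the standard scaling: setting $v_t(x)=|u_n|(x/t)$ one has $[v_t]_{s,p}^p=t^{N-sp}[|u_n|]_{s,p}^p$ and $\int G(v_t)\dd x=t^N\int G(|u_n|)\dd x$. Choosing $t_n=\bigl(\int G(|u_n|)\dd x\bigr)^{-1/N}\in(0,1]$, the rescaled function $\tilde u_n:=v_{t_n}$ lies in $\mathcal{M}$, and the hypothesis $sp<N$ forces $t_n^{N-sp}\le 1$, so $T(\tilde u_n)\le T(|u_n|)\le T(u_n)$.

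For step 3 I would let $u_n^\ast$ denote the Schwarz symmetric-decreasing rearrangement of $\tilde u_n$. Since $\tilde u_n\ge 0$ and $G(0)=0$, equimeasurability yields $\int G(u_n^\ast)\dd x=\int G(\tilde u_n)\dd x=1$, so $u_n^\ast\in\mathcal{M}$; and the fractional Pólya–Szegő inequality $[u_n^\ast]_{s,p}\le[\tilde u_n]_{s,p}$ gives $T(u_n^\ast)\le T(\tilde u_n)\le T(u_n)$. Hence $(u_n^\ast)$ is a radially symmetric, non-negative minimizing sequence in $\mathcal{M}$. The only non-routine ingredient is the fractional Pólya–Szegő inequality for the Gagliardo seminorm, which I would quote from the literature (e.g., Almgren–Lieb or Frank–Seiringer) rather than reprove.
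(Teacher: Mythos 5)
Your proof is correct and follows the same route as the paper's reference ([AFS], Lemma 2.6, adapted from $p=2$ to general $p$): pass to $|u_n|$, rescale by a dilation to restore the constraint $\int G = 1$ using $sp<N$, then Schwarz-symmetrize, citing the fractional Pólya--Szegő inequality (e.g., Frank--Seiringer for the $p$-Gagliardo seminorm). The bookkeeping in step 1 correctly uses the ground-state convention $f(t)=0$ for $t\le 0$ together with $F\ge 0$ (from $(f3)_{gs}$ or $(f5)$) to get $\int G(|u_n|)\ge 1$, and the sign $N-sp>0$ makes the rescaling non-increasing for $T$; the only point worth stating explicitly when writing this up is that $\int F(\tilde u_n)$ and $\int |\tilde u_n|^p$ are separately finite, so the equimeasurability identity $\int G(u_n^\ast)=\int G(\tilde u_n)$ is legitimate despite $G$ changing sign.
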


\begin{lemma} Suppose that $v_n:=u_n-u \rightharpoonup 0$ in $W^{s,p}(\mathbb{R}^N)$ and $[v_n]_{s,p}^p\to L>0.$
Then $$D\geq p^{-sp/N}S$$
\end{lemma}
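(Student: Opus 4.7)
The plan is to combine a Brezis–Lieb-type decomposition of both the Gagliardo seminorm and the functional $u\mapsto\int G(u)\,\dd x$ with the fractional Sobolev inequality, following the scheme of \cite[Lemma 2.7]{AFS} adapted to the $p$-Laplacian setting. Using the standard Brezis–Lieb lemma I obtain $[u_n]_{s,p}^p=[u]_{s,p}^p+[v_n]_{s,p}^p+o(1)$, so passing to the limit in $[u_n]_{s,p}^p\to pD$ yields $pD=[u]_{s,p}^p+L$. The delicate point—which I regard as the main technical obstacle—is the analogous splitting $1=\int G(u)\,\dd x+\int G(v_n)\,\dd x+o(1)$; I plan to obtain it by writing $F(t)=\widetilde F(t)+\tfrac{1}{p_s^*}|t|^{p_s^*}$ with $\widetilde F$ subcritical (by $(f1)$ and $(f2)$), handling $\int\widetilde F(v_n)\,\dd x\to 0$ via the compact embedding $W_r^{s,p}\hookrightarrow L^q$ for $q\in(p,p_s^*)$, and applying classical Brezis–Lieb to the $L^{p_s^*}$-piece and to $\|u_n\|_p^p$. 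Set $a:=\int G(u)\,\dd x$ and $b:=\lim_n\int G(v_n)\,\dd x=1-a$.

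Next I will exploit the scaling structure of $\mathcal{M}$: for any $w\neq 0$ with $\int G(w)\,\dd x>0$, the dilation $w_\lambda(x):=w(\lambda x)$ with $\lambda^N=\int G(w)\,\dd x$ lies in $\mathcal{M}$ and satisfies $[w_\lambda]_{s,p}^p=\lambda^{sp-N}[w]_{s,p}^p$, so $T(w_\lambda)\ge D$ gives $[w]_{s,p}^p\ge pD\bigl(\int G(w)\,\dd x\bigr)^{(N-sp)/N}$. If $a\in(0,1)$ (hence $b>0$), applying this to both $u$ and $v_n$ and invoking strict concavity of $t\mapsto t^{(N-sp)/N}$ forces $a^{(N-sp)/N}+b^{(N-sp)/N}>(a+b)^{(N-sp)/N}=1$, which together with the decomposition yields $pD=[u]_{s,p}^p+L>pD$, a contradiction. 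If instead $a\ge 1$, the scaling applied to $u$ alone gives $[u]_{s,p}^p\ge pD$, forcing $L\le 0$ and contradicting $L>0$. Therefore I must have $a\le 0$ and $b\ge 1$.

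Finally I will estimate $b$ from above via growth and Sobolev. Hypotheses $(f1)$--$(f2)$ imply that for any $\varepsilon>0$ and $q\in(p,p_s^*)$ there exists $C_\varepsilon>0$ with $|F(t)|\le \tfrac{\varepsilon}{p}|t|^p+C_\varepsilon|t|^q+\tfrac{1+\varepsilon}{p_s^*}|t|^{p_s^*}$ for all $t\in\mathbb{R}$. Since $v_n\to 0$ in $L^q$ by compactness and $\varepsilon<1$, integration yields $\int G(v_n)\,\dd x\le \tfrac{1+\varepsilon}{p_s^*}\|v_n\|_{p_s^*}^{p_s^*}+o(1)$; passing to the limit, then letting $\varepsilon\to 0$, and applying the Sobolev inequality $\|v_n\|_{p_s^*}^p\le S^{-1}[v_n]_{s,p}^p\to S^{-1}L$, the bound $b\ge 1$ gives
\[L\ge S\,(p_s^*)^{p/p_s^*}=S\,p^{(N-sp)/N}\Bigl(\tfrac{N}{N-sp}\Bigr)^{(N-sp)/N}\ge S\,p^{(N-sp)/N}.\]
Since $pD\ge L$, division by $p$ produces $D\ge p^{-sp/N}S$, as claimed.
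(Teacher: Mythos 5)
Your proposal follows the standard route, which is also what the paper's cited reference (Lemma~2.7 in \cite{AFS}) does: Brezis--Lieb splitting for the Gagliardo seminorm and for the constraint functional, the dilation estimate $[w]_{s,p}^p\ge pD\bigl(\int G(w)\,\dd x\bigr)^{(N-sp)/N}$, strict subadditivity of $t\mapsto t^{(N-sp)/N}$, and the Sobolev inequality for $[v_n]_{s,p}$. The case analysis forcing $\int G(u)\,\dd x\le 0$ and the final estimate are sound, and in fact you obtain the slightly sharper bound $D\ge \tfrac{S}{p}(p_s^*)^{p/p_s^*}\ge p^{-sp/N}S$.

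There is, however, a genuine gap in the one step you yourself flag as delicate. You propose to split $F(t)=\widetilde F(t)+\tfrac{1}{p_s^*}|t|^{p_s^*}$ with $\widetilde F$ subcritical, ``by $(f1)$ and $(f2)$''. That does not follow: $(f2)$ is only a $\limsup$ condition, so for large $t$ one merely gets $F(t)\le\tfrac{1+\varepsilon}{p_s^*}|t|^{p_s^*}$, and since $F\ge 0$ the remainder $\widetilde F(t)=F(t)-\tfrac{1}{p_s^*}|t|^{p_s^*}$ can be as negative as $-\tfrac{1}{p_s^*}|t|^{p_s^*}$; it is not subcritical in general (e.g. if $f$ grows like $\mu t^{q-1}$ alone). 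The repair is available from the hypotheses you actually have: since $f(t)=0$ for $t\le 0$ and $f(t)\ge \mu t^{q-1}>0$ for $t>0$ by $(f3)_{gs}$, one has $|f|=f$, and combining $(f1)$, $(f2)$ and continuity of $f$ on compact sets yields the two-sided bound $|f(t)|\le \varepsilon|t|^{p-1}+C_\varepsilon|t|^{p_s^*-1}$ for every $t$. This is exactly the growth assumption under which the generalized Brezis--Lieb lemma for $\int F(u_n)\,\dd x$ applies (together with $u_n\to u$ a.e.\ and boundedness of $(u_n)$ in $L^p\cap L^{p_s^*}$), giving $\int F(u_n)\,\dd x-\int F(v_n)\,\dd x-\int F(u)\,\dd x\to 0$ along a subsequence; together with the classical Brezis--Lieb for $\|u_n\|_p^p$ this produces the splitting $1=\int G(u)\,\dd x+\lim_n\int G(v_n)\,\dd x$ you need. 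With this substitution your argument is correct; the subcritical-plus-pure-power decomposition as stated is not.
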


\begin{lemma}It holds
$$b<\displaystyle\frac{s}{N}\left(\frac{N-sp}{pN}\right)^{(N-sp)/sp}p^{N-sp/sp}S^{N/sp}$$
\end{lemma}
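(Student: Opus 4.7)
The strategy is a Brezis--Nirenberg-type test-function argument: exhibit a single admissible path $\gamma\in\Gamma$ whose maximum energy lies strictly below the claimed bound $b^{*}$. For a nonnegative radial $\phi\in W_{r}^{s,p}(\mathbb{R}^N)\setminus\{0\}$ and any $\tau\geq 0$, hypothesis $(f3)_{gs}$ gives $F(\tau\phi)\geq \mu\tau^{q}\phi^{q}/q$, hence
\begin{equation*}
I(\tau\phi)\;\leq\;\frac{\tau^{p}}{p}\,\|\phi\|_{s,p}^{p}-\frac{\mu\tau^{q}}{q}\,\|\phi\|_{q}^{q}\;=:\;h_{\phi}(\tau).
\end{equation*}
Because $q>p$, $h_{\phi}(\tau)\to-\infty$, so one may select $T>0$ with $h_{\phi}(T)<0$; then $\gamma(t):=tT\phi$ is admissible and $b\leq\max_{\tau\geq 0}h_{\phi}(\tau)$.

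A one-variable calculus exercise locates the maximum of $h_{\phi}$ at $\tau_{*}=\bigl(\|\phi\|_{s,p}^{p}/(\mu\|\phi\|_{q}^{q})\bigr)^{1/(q-p)}$, and yields
\begin{equation*}
\max_{\tau\geq 0}h_{\phi}(\tau)\;=\;\frac{q-p}{pq}\,\mu^{-p/(q-p)}\left(\frac{\|\phi\|_{s,p}^{p}}{\|\phi\|_{q}^{p}}\right)^{q/(q-p)}.
\end{equation*}
By the definition of the Sobolev constant in the Introduction, $C_{q}=\inf_{\phi\neq 0}\|\phi\|_{s,p}^{p}/\|\phi\|_{q}^{p}$, and since $q\in(p,p_{s}^{*})$ lies in the compact range of the radial embedding, the infimum is attained (or at least approached) on nonnegative radial functions. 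Infimizing the above bound over such $\phi$ gives
\begin{equation*}
b\;\leq\;\frac{q-p}{pq}\,\mu^{-p/(q-p)}\,C_{q}^{q/(q-p)}.
\end{equation*}

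The closing step is a bookkeeping identity: raising the definition of $\mu^{*}$ in $(f3)_{gs}$ to the power $p/(q-p)$ and rearranging gives
\begin{equation*}
\frac{q-p}{pq}\,(\mu^{*})^{-p/(q-p)}\,C_{q}^{q/(q-p)}\;=\;\frac{s}{N}\!\left(\frac{N-sp}{pN}\right)^{(N-sp)/sp}\!p^{(N-sp)/sp}\,S^{N/sp}\;=\;b^{*}.
\end{equation*}
Since $\mu>\mu^{*}$ strictly, $\mu^{-p/(q-p)}<(\mu^{*})^{-p/(q-p)}$, and chaining the two estimates delivers $b<b^{*}$. The main obstacle I anticipate is controlling strictness when $C_{q}$ is only approached rather than attained: one must choose $\phi$ whose ratio $\|\phi\|_{s,p}^{p}/\|\phi\|_{q}^{p}$ sits close enough to $C_{q}$ not to swallow the gap opened by $\mu>\mu^{*}$. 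In the present subcritical range this is automatic, so the substantive content reduces to the algebraic match above together with the linear-ray estimate, both routine once one has seen the Brezis--Nirenberg template.
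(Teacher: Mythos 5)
Your argument is correct and is the natural one-parameter test-function proof. The paper itself only points the reader to \cite[Lemma 2.8]{AFS}, so you have in effect reconstructed the proof the paper omits, and your algebra is exactly the content of the definition of $\mu^*$: writing $K$ for the bracketed quantity in $(f3)_{gs}$, so that $\mu^*=K^{(q-p)/p}\bigl(\tfrac{q-p}{pq}\bigr)^{(q-p)/p}C_q^{q/p}$, one has
$$
\frac{q-p}{pq}\,(\mu^{*})^{-p/(q-p)}\,C_{q}^{q/(q-p)}=K^{-1}
=\frac{s}{N}\left(\frac{N-sp}{pN}\right)^{(N-sp)/sp}p^{(N-sp)/sp}S^{N/sp},
$$
which is the stated bound (the exponent printed as $p^{N-sp/sp}$ in the lemma is a typo for $p^{(N-sp)/sp}$, as is clear from comparing with $\tilde p$ and the lemma $D\geq p^{-sp/N}S$). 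Two small remarks. First, the restriction to \emph{nonnegative} $\phi$ is not cosmetic: $(f3)_{gs}$, together with the standing convention $f(t)=0$ for $t\leq 0$, only yields $F(t)\geq \mu t^q/q$ for $t\geq 0$, so the inequality $I(\tau\phi)\leq h_\phi(\tau)$ would fail on a sign-changing $\phi$. Second, your closing worry about attainment of $C_q$ is moot, and not because the embedding is compact: you never need a minimizer, only that for every $\varepsilon>0$ there is a nonnegative radial $\phi$ with $\|\phi\|_{s,p}^p/\|\phi\|_q^p<C_q+\varepsilon$, which after letting $\varepsilon\to 0$ gives the non-strict bound $b\leq \frac{q-p}{pq}\mu^{-p/(q-p)}C_q^{q/(q-p)}$; the strictness in $b<b^*$ comes entirely from $\mu>\mu^*$. (That such a nonnegative radial near-minimizing $\phi$ exists follows from the fractional P\'olya--Szeg\H{o} inequality, which lets one pass from an arbitrary competitor to its symmetric decreasing rearrangement without increasing the ratio.)
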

This is Lemma 2.8 in \cite{AFS}.

\begin{lemma}If $u_n \rightharpoonup u$ in $W^{s,p}(\mathbb{R}^N),$
then $u_n\to u$ in $\mathcal{D}^{s,p}(\mathbb{R}^N).$ In particular,
$u_n\to u$ in $L^{p^*_s}(\mathbb{R}^N).$
\end{lemma}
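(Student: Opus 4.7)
The plan is to argue by contradiction. Set $v_n := u_n - u$, so that $v_n \rightharpoonup 0$ in $W^{s,p}(\mathbb{R}^N)$ and, after passing to a subsequence, $v_n(x) \to 0$ almost everywhere. The first step is to invoke a Brezis--Lieb type identity for the Gagliardo seminorm, namely
\[
[u_n]_{s,p}^p = [v_n]_{s,p}^p + [u]_{s,p}^p + o(1),
\]
which is standard once we have weak and a.e. convergence of a bounded sequence in $W^{s,p}(\mathbb{R}^N)$. Passing to a further subsequence I may assume that $[v_n]_{s,p}^p \to L \geq 0$.

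Suppose for contradiction that $L > 0$. Then the previous lemma (applied to the minimizing sequence $(u_n)$) yields the lower bound
\[
D \;\geq\; p^{-sp/N} S .
\]
Raising this to the power $N/sp$ after multiplying by $p$ gives
\[
(pD)^{N/sp} \;\geq\; \bigl(p \cdot p^{-sp/N} S\bigr)^{N/sp} \;=\; p^{(N-sp)/sp} S^{N/sp}.
\]
Plugging this estimate into the identity
\[
\tilde{p} \;=\; \frac{s}{N}\left(\frac{N-sp}{pN}\right)^{(N-sp)/sp}(pD)^{N/sp}
\]
that was established earlier, we obtain
\[
\tilde{p} \;\geq\; \frac{s}{N}\left(\frac{N-sp}{pN}\right)^{(N-sp)/sp} p^{(N-sp)/sp} S^{N/sp}.
\]

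On the other hand, the two preceding lemmas provide $\tilde{p} \leq b$ and the strict upper bound
\[
b < \frac{s}{N}\left(\frac{N-sp}{pN}\right)^{(N-sp)/sp} p^{(N-sp)/sp} S^{N/sp}.
\]
Chaining these gives $\tilde{p} < \tilde{p}$, a contradiction. Therefore $L = 0$, i.e.\ $[u_n - u]_{s,p} \to 0$, which is exactly convergence of $u_n$ to $u$ in $\mathcal{D}^{s,p}(\mathbb{R}^N)$. The ``in particular'' clause then follows from the continuous Sobolev embedding $\mathcal{D}^{s,p}(\mathbb{R}^N) \hookrightarrow L^{p_s^*}(\mathbb{R}^N)$.

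The only delicate point is the Brezis--Lieb splitting of the Gagliardo seminorm along a weakly convergent (not necessarily strongly convergent) bounded sequence; everything else is a clean bookkeeping exercise combining the precise constants in the Pohozaev characterization of $\tilde{p}$, the no-compactness threshold $p^{-sp/N} S$ for $D$, and the strict mountain pass upper bound on $b$. It is exactly the matching of these three constants that forces $L = 0$.
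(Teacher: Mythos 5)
The paper itself prints no proof for this lemma; it simply defers to \cite[Lemma 2.9]{AFS}. Your argument is precisely the one that the preceding string of lemmas is built to deliver, and it is correct. The crux is the contradiction chain: assume a subsequence with $[v_n]_{s,p}^p\to L>0$; the previous lemma then gives $D\geq p^{-sp/N}S$, hence
\[
(pD)^{N/sp}\geq \bigl(p^{(N-sp)/N}S\bigr)^{N/sp}=p^{(N-sp)/sp}S^{N/sp},
\]
so the Poho\v{z}aev characterization $\tilde p=\frac{s}{N}\bigl(\frac{N-sp}{pN}\bigr)^{(N-sp)/sp}(pD)^{N/sp}$ forces
\[
\tilde p\;\geq\;\frac{s}{N}\Bigl(\frac{N-sp}{pN}\Bigr)^{(N-sp)/sp}p^{(N-sp)/sp}S^{N/sp}\;>\;b\;\geq\;\tilde p,
\]
which is absurd; thus $L=0$ and $u_n\to u$ in $\mathcal D^{s,p}(\mathbb{R}^N)$, with the $L^{p^*_s}$ convergence following from the embedding with constant $S$.

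Two small remarks, neither a real gap. First, the Brezis--Lieb splitting of the Gagliardo seminorm that you single out as the ``only delicate point'' is not actually used in your argument: to get $[v_n]_{s,p}^p\to L\geq 0$ along a subsequence you only need Bolzano--Weierstrass for a bounded real sequence. The Brezis--Lieb identity does play a role, but it lives inside the proof of the preceding lemma (``$L>0\Rightarrow D\geq p^{-sp/N}S$''), which you invoke as a black box anyway; attributing the delicacy to your own step is a mild misallocation, not an error. Second, the contradiction argument is run along a subsequence; to upgrade to the full sequence you should note (it is one sentence) that since every subsequence of $(u_n)$ admits a further subsequence with $[u_{n_k}-u]_{s,p}\to 0$, the whole sequence converges in $\mathcal D^{s,p}(\mathbb{R}^N)$. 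You should also make explicit that $(u_n)$ here is the bounded, radial, non-negative minimizing sequence for $T$ from the earlier lemmas, since the hypotheses of those lemmas (in particular the one yielding $D\geq p^{-sp/N}S$) are tied to that sequence rather than to an arbitrary weakly convergent one.
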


This is Lemma 2.9 in \cite{AFS}.

Finally, the proof of Theorem \ref{Thgs} in the case $sp<N$ results
from the proof of \cite[Theorem 1.2]{AFS}.

\textbf{The case $sp=N$.}

The proof of the next two results follows immediately from the corresponding results in \cite{AFS}, namely Lemmas 3.1 and 3.2.
\begin{lemma} The set  $\mathcal{M}$ defined in \eqref{M} is not empty and a $\mathcal{C}^1$ manifold.
\end{lemma}
\begin{lemma}Assume that $f$ satisfies $(f1)$ and $(f5)$ and let $(u_n)$ be a sequence in $W^{s,p}_r(\mathbb{R}^N)$ such that  $u_n \rightharpoonup u$ in $W^{s,p}(\mathbb{R}^N)$ such that
$$\sup_{n}{[u_n]_{s,p}^p}=\rho<1\quad \textrm{and}\quad \sup_{n}{\|u_n\|_{p}^p}=M < \infty.$$
Then, $$\int_{\mathbb{R}^N}{F(u_n)} \to \int_{\mathbb{R}^N}{F(u)}.$$
\end{lemma}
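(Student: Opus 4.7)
The plan is to apply Vitali's convergence theorem to upgrade a.e.\ convergence of $F(u_n)$ to convergence in $L^1(\mathbb{R}^N)$, which immediately yields the claimed integral convergence. Three ingredients are needed: pointwise convergence, equi-integrability on sets of small measure, and tightness at infinity.

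First I would pass to a subsequence along which $u_n \to u$ a.e. This is available because, in the case $sp = N$, the embedding $W^{s,p}_r(\mathbb{R}^N) \hookrightarrow L^q(\mathbb{R}^N)$ is compact for every $q \in (p, +\infty)$; combined with $u_n \rightharpoonup u$, this yields strong $L^q$ convergence and, up to a subsequence, $u_n(x)\to u(x)$ a.e., so $F(u_n)\to F(u)$ a.e.\ by continuity of $F$. Then, using $\sup_n [u_n]^p_{s,p} = \rho < 1$, I would invoke the radial Trudinger--Moser inequality (in the Adachi--Tanaka-style formulation, cf.\ Theorem \ref{MTNp} and Remark \ref{RemarkTM}) with an exponent $\alpha > \alpha_0$ close to $\alpha_0$ and an exponent $r' > 1$ close to $1$, chosen so that $r'\alpha\,\rho^{p'/p}$ stays below the critical threshold, obtaining
\[
\sup_n \int_{\mathbb{R}^N} Q(|u_n|)^{r'}\, \dd x \leq K < \infty.
\]
Combined with Proposition \ref{DesgExp}, for a fixed $q > p$ and any $\varepsilon > 0$,
\[
|F(u_n)| \leq \frac{\varepsilon}{p}|u_n|^p \,+\, C\, Q(|u_n|)\,|u_n|^q .
\]

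For equi-integrability over a measurable set $A$, H\"older's inequality with $r = r'/(r'-1)$ yields
\[
\int_A Q(|u_n|)\,|u_n|^q\, \dd x \leq K^{1/r'}\!\left(\int_A |u_n|^{qr}\, \dd x\right)^{\!1/r},
\]
and since $u_n \to u$ strongly in $L^{qr}(\mathbb{R}^N)$ (noting $qr > p$), the family $\{|u_n|^{qr}\}$ is equi-integrable and tight in $L^1$. The $|u_n|^p$-term is handled by splitting $\mathbb{R}^N$ along $\{|u_n|\leq\delta\}$, where $(f1)$ makes $|F(u_n)|$ controlled by $\eta|u_n|^p$ with $\eta$ as small as desired, versus its complement, which by Chebyshev and $\|u_n\|_p^p\leq M$ has measure at most $M/\delta^p$ and whose contribution is absorbed into the $Q$-estimate above. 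The same two H\"older estimates, combined with the tightness of $|u_n|^{qr}$ and $|u_n|^p$ (the latter handled as above by the $(f1)$-dichotomy), give $\sup_n\int_{|x|>R}|F(u_n)|\, \dd x \to 0$ as $R\to\infty$.

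The main obstacle is the careful choice of the parameters $\alpha, q, r'$ in the Trudinger--Moser step: because only the Gagliardo seminorm (not the full $W^{s,p}$-norm) is forced below $1$, one must use the scale-invariant radial formulation of the inequality and verify that $r'$ can indeed be chosen strictly greater than $1$ while keeping the exponential factor integrable uniformly in $n$. Once this is arranged, Vitali's theorem yields $F(u_n) \to F(u)$ in $L^1(\mathbb{R}^N)$, proving the lemma.
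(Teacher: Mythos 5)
The paper gives no proof of this lemma at all; it simply cites \cite[Lemma~3.2]{AFS}. So there is no in-paper argument to compare against, and your proposal must be judged on its own merits. The overall architecture you choose (Vitali's convergence theorem, using the pointwise limit from the compact embedding $W^{s,p}_r(\mathbb{R}^N)\hookrightarrow L^q(\mathbb{R}^N)$, $q>p$, plus a Trudinger--Moser bound and the decomposition $|F(t)|\leq\frac{\varepsilon}{p}|t|^p+CQ(t)|t|^q$ from Proposition~\ref{DesgExp}) is exactly the right kind of argument for a lemma of this type, and the two-piece handling of equi-integrability and tightness is standard.

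However, the decisive step --- producing a uniform bound $\sup_n\int Q(|u_n|)^{r'}\,\dd x<\infty$ for some $r'>1$ --- is left genuinely open, and the way you attempt it has a concrete gap. You appeal to ``the radial Trudinger--Moser inequality in the Adachi--Tanaka-style formulation, cf.\ Theorem~\ref{MTNp} and Remark~\ref{RemarkTM},'' but Theorem~\ref{MTNp} is stated for $W^{1,p}_r(\mathbb{R}^p)$, i.e.\ the \emph{local} case $s=1$; it says nothing about $W^{s,p}_r(\mathbb{R}^N)$ with $s<1$ and $sp=N$. The fractional inequality available in the paper is Theorem~2.1 (Ozawa), but as stated there the hypothesis is $\|u\|_{s,p}\leq 1$ on the \emph{full} Sobolev norm, whereas the lemma's assumptions only control the Gagliardo seminorm ($[u_n]_{s,p}^p\leq\rho<1$) together with a separate, possibly large, bound $\|u_n\|_p^p\leq M$. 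Neither cited theorem therefore applies directly. What is needed is a seminorm-normalized fractional Trudinger--Moser inequality on $W^{s,p}_r(\mathbb{R}^N)$ (which is what \cite{AFS} actually invokes), and, once one has it, a verification that $r'\alpha\rho^{p'/p}$ can be kept below the critical exponent --- which in turn depends on the relation between $\alpha_0$ from $(f4)$ and the Trudinger--Moser critical constant, a relation you never pin down. You flag this as ``the main obstacle'' and write ``once this is arranged,'' but arranging it is precisely the content of the lemma: without a correct statement of the fractional TM inequality with seminorm normalization and an explicit exponent check, the equi-integrability bound, and hence the entire Vitali argument, is not established.
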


The relation between the ground state level and the mini-max level
defined in \eqref{b} is given by the following result, which is
proved in \cite[Lemma 3.3, Lemma 3.4]{AFS}.
\begin{lemma} It holds $0<D\leq b$.
\end{lemma}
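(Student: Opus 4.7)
The lemma has two independent halves, both modeled on \cite[Lemmas 3.3 and 3.4]{AFS}. The main analytic inputs are the pointwise estimate in Proposition~\ref{DesgExp}, the Trudinger--Moser inequality (Theorem~\ref{MTNp} together with Remark~\ref{RemarkTM}), and the embedding $W^{s,p}_r(\mathbb{R}^N)\hookrightarrow L^q(\mathbb{R}^N)$ valid for every $q\in[p,\infty)$ when $sp=N$.

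For $D>0$, the plan is to turn the defining constraint $\int F(u)\,\dd x=\tfrac{1}{p}\|u\|_p^p$ on $\mathcal{M}$ into a lower bound for $[u]_{s,p}$. Fix $\varepsilon<1/p$ and $q>p$ and apply Proposition~\ref{DesgExp} to obtain
\[
\frac{1}{p}\|u\|_p^p=\int F(u)\,\dd x\leq \frac{\varepsilon}{p}\|u\|_p^p+C\int Q(u)|u|^q\,\dd x.
\]
The Hölder--Trudinger--Moser--Sobolev chain already used in the proof of Lemma~\ref{lemgeo} (valid whenever $\|u\|_{s,p}$ stays in a suitable bounded range) bounds the last integral by $C'\|u\|_{s,p}^q$, so $\|u\|_p^p\leq C''\|u\|_{s,p}^q$. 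Because $q>p$ this inequality prevents $\|u\|_{s,p}$ from tending to $0$; splitting $\|u\|_{s,p}^p=\|u\|_p^p+[u]_{s,p}^p$ into the two cases of which summand dominates, and, in the Gagliardo-dominant case, using $(f3)$ together with $\int F(u)=\tfrac{1}{p}\|u\|_p^p$ to exclude the degenerate regime $\|u\|_p\to 0$, one arrives at a uniform positive lower bound $[u]_{s,p}^p\geq c_0>0$ on $\mathcal{M}$, whence $D\geq c_0/p>0$.

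For $D\leq b$, it is enough to show that every $\gamma\in\Gamma$ meets $\mathcal{M}$: if $\gamma(t^*)\in\mathcal{M}$ for some $t^*\in(0,1)$ then $I(\gamma(t^*))=T(\gamma(t^*))\geq D$, and taking $\inf_{\gamma}$ gives $b\geq D$. To locate $t^*$, fix $u\in W^{s,p}_r(\mathbb{R}^N)\setminus\{0\}$ and study $\sigma\mapsto\int G(\sigma u)\,\dd x$: by $(f1)$ it is strictly negative as $\sigma\to 0^+$, while by $(f3)$ (since $F(\sigma u)\geq \mu\sigma^q|u|^q/q$ with $q>p$) it is positive for $\sigma$ large. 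Hence there is a positive zero $\sigma(u)$ with $\sigma(u)u\in\mathcal{M}$. As $t\to 0^+$ the continuity of $\gamma$ and $\gamma(0)=0$ force $\sigma(\gamma(t))\to\infty$, and since $I(\gamma(1))<0$ implies $\int G(\gamma(1))>T(\gamma(1))\geq 0$ we have $\sigma(\gamma(1))<1$. An intermediate-value argument applied to $t\mapsto \sigma(\gamma(t))-1$ produces $t^*\in(0,1)$ with $\gamma(t^*)\in\mathcal{M}$.

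The hard point is the second step: $\sigma(u)$ need not be uniquely determined, so a rigorous application of IVT requires a continuous selection. I would handle this by taking $\sigma(u)$ to be the smallest positive zero of $\sigma\mapsto\int G(\sigma u)\,\dd x$ and verifying its continuity along $\gamma$ from Proposition~\ref{DesgExp} and Trudinger--Moser; alternatively, one can argue directly on $\phi(t)=\int G(\gamma(t))\,\dd x$, using $\phi(0)=0<\phi(1)$ together with the already established bound $D>0$ to rule out intersections at the trivial point $\gamma(t)=0$.
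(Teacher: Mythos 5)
The paper does not actually prove this lemma; it simply cites Lemmas 3.3--3.4 of Alves--Figueiredo--Siciliano \cite{AFS}, so your sketch is an attempt to fill in a gap the paper leaves to a reference. Unfortunately both halves of your argument have real problems.

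For $D>0$, the chain ``$\int F(u)=\frac1p\|u\|_p^p$, Proposition~\ref{DesgExp}, H\"older, Trudinger--Moser, Sobolev'' gives, at best, an inequality of the form $\|u\|_p^p\leq C\|u\|_{s,p}^{\theta}$ with $\theta>p$ (and even this only after you make sure $\|u\|_{s,p}$ lies in a range where Ozawa's inequality is actually applicable, which you do not verify). From such an inequality one can only conclude that $\|u\|_p$, and hence the \emph{full} norm $\|u\|_{s,p}$, is bounded away from zero. But $D$ is the infimum of $T(u)=\frac1p[u]_{s,p}^p$, which involves \emph{only} the Gagliardo seminorm; nothing in your argument rules out a minimizing sequence with $[u_n]_{s,p}\to 0$ while $\|u_n\|_p$ stays bounded below. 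The clean way to get $D>0$ exploits the scaling that is special to $sp=N$: for $u\in\mathcal{M}$ and $\lambda>0$, $u_\lambda(x)=u(x/\lambda)$ still lies in $\mathcal{M}$ and $[u_\lambda]_{s,p}=[u]_{s,p}$, while $\|u_\lambda\|_p^p=\lambda^N\|u\|_p^p$. If $[u]_{s,p}<\rho$ one may therefore choose $\lambda$ so that $\|u_\lambda\|_{s,p}=\rho$, and then Lemma~\ref{lemgeo}$(i)$ gives $T(u)=T(u_\lambda)=I(u_\lambda)\geq\beta$; otherwise $T(u)\geq\rho^p/p$. Hence $D\geq\min\{\beta,\rho^p/p\}>0$. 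Your sketch never uses this dilation invariance, which is precisely what makes the $sp=N$ constraint $\int G(u)=0$ tractable.

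For $D\leq b$, the idea of intersecting a mountain-pass path with $\mathcal{M}$ is sound, but your main route via a fibering map $\sigma\mapsto\int G(\sigma u)\,\dd x$ and a selection $\sigma(u)$ is, as you yourself flag, not established: $\sigma(u)$ need not be unique or continuous, and there is no analogue of the Nehari-manifold uniqueness here. Your ``alternative'' of arguing directly on $\phi(t)=\int G(\gamma(t))\,\dd x$ is the right track, but it also needs care at the last zero $t_0$ of $\gamma$: to conclude $\phi(t)<0$ for $t$ slightly larger than $t_0$ you would need the $\|\cdot\|_p^p$ term in $G(\gamma(t))$ to dominate, which is not automatic if $[\gamma(t)]_{s,p}$ dominates $\|\gamma(t)\|_p$. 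In short, neither half is complete as written, and in particular the $D>0$ argument misses the key structural ingredient (scale invariance of $T$ and of $\mathcal{M}$ at $sp=N$) that makes the lemma go through.
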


The next result follows easily by adapting \cite[Lemma 3.5]{AFS}.
\begin{lemma} If $\mu > \mu^*$ then $b< \frac{1}{p}$
    \end{lemma}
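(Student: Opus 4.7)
The plan is to construct an explicit path $\gamma\in\Gamma$ whose $I$-maximum is strictly less than $1/p$. I would pick a nonnegative radial $u_0\in W_r^{s,p}(\mathbb{R}^N)\setminus\{0\}$, chosen to nearly realize the Sobolev embedding constant $C_q$, and consider the linear path $\gamma(t)=tRu_0$, $t\in[0,1]$, for a suitable $R>0$ to be fixed below.

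Using $(f3)_{gs}$ with $u_0\geq 0$, we have $F(\tau u_0)\geq \frac{\mu}{q}(\tau u_0)^q$ for every $\tau\geq 0$, so
$$I(\tau u_0)\leq \varphi(\tau) := \frac{\tau^p}{p}\|u_0\|_{s,p}^p - \frac{\mu \tau^q}{q}\|u_0\|_q^q.$$
A straightforward one-variable calculation locates the maximum of $\varphi$ on $[0,\infty)$ at
$$\tau^* = \left(\frac{\|u_0\|_{s,p}^p}{\mu\|u_0\|_q^q}\right)^{1/(q-p)},\qquad \varphi(\tau^*) = \frac{q-p}{pq}\cdot \frac{\|u_0\|_{s,p}^{pq/(q-p)}}{(\mu\|u_0\|_q^q)^{p/(q-p)}}.$$
Since $\varphi(\tau)\to -\infty$ as $\tau\to\infty$, choosing $R$ large enough ensures $I(Ru_0)<0$, so $\gamma\in\Gamma$ and
$$b\leq \max_{t\in[0,1]} I(tRu_0)\leq \varphi(\tau^*).$$

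The inequality $\varphi(\tau^*)<1/p$ reduces to an elementary algebraic condition linking $\mu$, $\|u_0\|_{s,p}$ and $\|u_0\|_q$, equivalent to
$$\mu\,\|u_0\|_q^q > \left(\tfrac{q-p}{q}\right)^{(q-p)/p}\|u_0\|_{s,p}^q.$$
Optimizing over $u_0$ amounts to minimizing the ratio $\|u_0\|_{s,p}^q/\|u_0\|_q^q$, which by the Sobolev inequality $C_q\|u_0\|_q^p\leq \|u_0\|_{s,p}^p$ is bounded below by $C_q^{q/p}$ and is (approximately) attained in the radial subclass thanks to the compact embedding $W_r^{s,p}(\mathbb{R}^N)\hookrightarrow L^q(\mathbb{R}^N)$, valid for every $q\in(p,\infty)$ when $sp=N$. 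For such $u_0$ the hypothesis $\mu>\mu^*$ yields $\varphi(\tau^*)<1/p$ and hence $b<1/p$.

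The main subtle point is selecting a radial $u_0$ nearly realizing the optimal Sobolev constant $C_q$; once this is in place, the remainder is the elementary one-variable optimization above, mirroring \cite[Lemma 3.5]{AFS}.
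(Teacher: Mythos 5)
Your overall strategy (construct an explicit mountain-pass path, bound $I$ along it using $(f3)_{gs}$, perform a one-variable maximization, and then invoke the Sobolev embedding constant) is the natural one and matches the spirit of the paper's argument via \cite[Lemma 3.5]{AFS}. The one-variable algebra is carried out correctly: for $\varphi(\tau)=\frac{\tau^p}{p}\|u_0\|_{s,p}^p-\frac{\mu\tau^q}{q}\|u_0\|_q^q$ one indeed has
\[
\max_{\tau>0}\varphi(\tau)=\frac{q-p}{pq}\,\frac{\|u_0\|_{s,p}^{pq/(q-p)}}{\bigl(\mu\|u_0\|_q^q\bigr)^{p/(q-p)}},
\]
and $\max\varphi<1/p$ is equivalent to $\mu\|u_0\|_q^q>\left(\frac{q-p}{q}\right)^{(q-p)/p}\|u_0\|_{s,p}^q$.

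The problem is that this is \emph{not} the threshold in the lemma. Even if you choose $u_0$ to realize the Sobolev constant exactly (and indeed the best constant is attained, not merely approached, since the radial embedding is compact), the quantity $\|u_0\|_{s,p}^q/\|u_0\|_q^q$ is bounded below by $C_q^{q/p}$, so the best conclusion your path yields is
\[
b<\tfrac{1}{p}\quad\text{whenever}\quad \mu>\left(\tfrac{q-p}{q}\right)^{(q-p)/p}C_q^{q/p}.
\]
But $\mu^*=\left(\tfrac{q-p}{pq}\right)^{(q-p)/p}C_q^{q/p}=p^{-(q-p)/p}\left(\tfrac{q-p}{q}\right)^{(q-p)/p}C_q^{q/p}$, so your threshold exceeds $\mu^*$ by the factor $p^{(q-p)/p}>1$ (e.g.\ a factor of $2$ when $p=2$, $q=4$). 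Your argument therefore proves the conclusion only on a strictly smaller parameter range $\mu>p^{(q-p)/p}\mu^*$, leaving the interval $(\mu^*,\,p^{(q-p)/p}\mu^*]$ uncovered. This is a genuine gap, not a cosmetic one.

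The missing idea is the scaling structure specific to $sp=N$: the dilation $u_\lambda(x)=u(x/\lambda)$ leaves the Gagliardo seminorm $[u]_{s,p}$ invariant while scaling $\|u\|_p^p$ and $\|u\|_q^q$ by $\lambda^N$. The argument underlying $\mu^*$ (as in the constrained-minimization setup with $\mathcal{M}=\{\int G(u)=0\}$ and $D=\inf_{\mathcal M}\frac{1}{p}[u]_{s,p}^p\le b$) exploits this: one rescales a test function onto $\mathcal M$ and estimates $[t(w)w]_{s,p}^p$, which produces a bound involving $\|w\|_p^p\,[w]_{s,p}^{q-p}/\|w\|_q^q$ — a scale-invariant Gagliardo–Nirenberg-type quantity — rather than the pure Sobolev quotient $\|w\|_{s,p}^q/\|w\|_q^q$ that a straight-line path forces on you. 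Because the extra degree of freedom (the dilation parameter) can be optimized independently of the amplitude, one gains exactly the factor $p^{(q-p)/p}$ that your computation loses. To fix your proof you would need to replace the linear path $t\mapsto tRu_0$ by a path that concatenates a short amplitude segment with a dilation segment (or, equivalently, argue through $D$ and the constraint manifold $\mathcal M$ as the paper and \cite{AFS} do), and redo the optimization with the dilation parameter present.
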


Finally, the proof of Theorem \ref{Thgs} in the case $sp=N$ results from the proof of \cite[Theorem 1.3]{AFS}.

\end{document}